\newtheorem{thm}{Theorem}[section]
\newtheorem{lem}[thm]{Lemma}
\newtheorem{pro}[thm]{Proposition}
\newtheorem{cor}[thm]{Corollary}
\theoremstyle{definition}
\newtheorem{defn}{Definition}[section]
\newtheorem{remk}{Remark}[section]
\newcommand{\M}{\mathbb M}
\newcommand{\R}{\mathbb R}
\newcommand{\dist}{\operatorname{dist}}
\newcommand{\z}{\mathbf z}
\newcommand{\tr}{\operatorname{tr}}
\newcommand{\rank}{\operatorname{rank}}
\newcommand{\supp}{\operatorname{supp}}
\newcommand{\dv}{\operatorname{div}}
\numberwithin{equation}{section}
\begin{document}

 \author{Baisheng Yan}
 \address{Department of Mathematics\\ Michigan State University\\ East Lansing, MI 48824, USA}
   \email{yanb@msu.edu}

\title[Nonuniqueness and instability for diffusion equations]{Convex integration for diffusion equations, I: Nonuniqueness and instability}

\subjclass[2010]{Primary 35K40, 35K51, 35D30. Secondary  35F50, 49A20}
\keywords{Diffusion equation as differential inclusion, convex integration,  Condition (OC),  nonuniqueness and instability of Lipschitz solutions}  

\begin{abstract}
We study the initial-boundary value problem for a  class of  diffusion equations with nonmonotone diffusion flux functions, including forward-backward parabolic equations and the gradient flows of  nonconvex energy functionals, under the framework of  partial differential inclusions using  the method of convex integration and Baire's category.   In  connection with  rank-one convex hulls of the corresponding matrix sets, we introduce a structural condition on the  
diffusion flux function, called  Condition (OC),  and establish  the nonuniqueness and instability of Lipschitz solutions to the initial-boundary value problem under this condition.  
 \end{abstract}

\maketitle

\section{Introduction}

Let $m,n\ge 1$, $T>0$, and let $\Omega$ be a bounded domain in $\R^n$ with Lipschitz boundary $\partial\Omega.$ Consider the general diffusion equations in divergence form:
\begin{equation}\label{sys0}
 u _t=\dv  \sigma(Du) \quad \mbox{in $\Omega_T=\Omega\times (0,T)$} 
\end{equation}
for $ u =(u^1,\dots,u^m),$ where each $u^i$ is a function of $(x,t)\in \Omega_T$,  $ u _t=(u^1_t,\dots,u_t^m)$ is the time-derivative of $u$, $D u =(u^i_{x_k})$ is  the spatial Jacobi matrix of $u,$ and $\sigma=(\sigma_k^i(A))\colon \M^{m\times  n}\to \M^{m\times n}$ is a given  diffusion flux function; here $\M^{m\times n}$ denotes the space of real  $m\times n$ matrices. 
When $m\ge 2$, equation (\ref{sys0}) is  a system of $m$ quasilinear partial differential equations:
\[
u^i_t=\sum_{k=1}^n (\sigma^i_k(D u ))_{x_k}\quad (i=1,2,\dots,m).
\]
 
 By a weak solution  of diffusion equation (\ref{sys0}) we mean a function $ u \in W^{1,1}_{loc}(\Omega_T; \R^m)$ that satisfies, for each $i=1,\dots,m$,  
\[
\int_{\Omega_T} \Big (u^i\phi_t -\sum_{k=1}^n \sigma_k^i(D u )\phi_{x_k}\Big )dxdt =0\quad \forall\; \phi\in C^\infty_c(\Omega_T).
\]
The  equation (\ref{sys0}) is said to be  {\em strongly  parabolic}  if   
\begin{equation}\label{mono}
\langle \sigma(A+p\otimes \alpha)-\sigma(A),\, p\otimes \alpha\rangle   \ge \nu |p|^2 |\alpha|^2 
\end{equation}
holds for all $A\in \M^{m\times n}, p\in \R^m$ and $\alpha\in \R^n,$ where $\nu >0$ is a constant; here $\langle A,B\rangle$ stands for the inner product in $\M^{m\times n}\cong \R^{mn}$ and $p\otimes \alpha$ denotes  the matrix $(p_i\alpha_k).$ If $m,n\ge 2$, it is well-known that condition (\ref{mono}) is strictly weaker than  the full {\em  monotonicity} condition: 
\begin{equation}\label{mono-0}
\langle \sigma(A+B)-\sigma(A),\, B \rangle   \ge \nu |B|^2\quad \forall\, A,\,B\in \M^{m\times n}.
\end{equation}
 
 In this paper, we investigate the initial-boundary value problem: 
 \begin{equation}\label{ibvp-1}
\begin{cases}  u _t=\dv \sigma (Du ) & \mbox{in $\Omega_T$,}
\\
\hfill u (x,t)=u_0(x) & (x\in \partial \Omega, \, 0<t<T),\\
 \hfill u (x,0)=u_0(x)& (x\in \Omega),
\end{cases}
\end{equation}
where $u_0\colon \bar\Omega\to\R^m$ is a given initial-boundary function. By a weak solution to this problem we mean a weak solution $u\in W^{1,1}(\Omega_T;\R^m)$ of equation (\ref{sys0}) such that the initial-boundary conditions in (\ref{ibvp-1}) are satisfied in the sense of trace.  

If the flux function $\sigma$ satisfies  the full  monotonicity condition (\ref{mono-0})  then problem (\ref{ibvp-1}) can be studied by the  standard monotonicity  method or general 
parabolic  theory  (see  \cite{Br, Li}); in this case,  one easily sees  that (\ref{ibvp-1}) can have at most one weak solution. 
The main purpose of the present paper is to study  problem (\ref{ibvp-1}) for certain general  {\em nonmonotone} flux functions   $\sigma$ that may satisfy the parabolicity condition (\ref{mono}) in the case of $m, n\ge 2.$ Our main results will apply to the special case of  gradient flows for certain polyconvex functionals; this will  be studied in the sequel  paper Yan \cite{Y1}.

Following the work of {Kim \& Yan} \cite{KY1,KY2,KY3} and {Zhang} \cite{Zh1, Zh2}, we reformulate  equation  (\ref{sys0}) as a space-time {\em partial differential inclusion.}  To this end,  we  introduce the  function $ w =[u,( v ^i)]\colon \Omega_T\to \R^m \times (\R^n)^m$ with  space-time Jacobian  matrix given by
\[
\nabla  w=\begin{bmatrix} Du &  u _t\\
(D v ^i) & ( v ^i_t)\end{bmatrix}\in \M^{(m+nm)\times (n+1)};
\]
here $\M^{(m+nm)\times (n+1)}$ denotes the space of matrices $X$ written in the form of
\begin{equation}\label{mX}
X=\begin{bmatrix} A & a\\ (B^i) & (b^i)\end{bmatrix},\quad A\in \M^{m\times n}, \; a\in \R^m,\; B^i\in \M^{n\times n},\; b^i\in \R^n,
\end{equation}
where   $i=1,\dots, m.$  
For  $ z =(z^1,\dots,z^m)\in\R^m$, we define  the  matrix set $\mathcal K( z )$ in  $\M^{(m+nm)\times (n+1)}$  by
\begin{equation}\label{set-K}
\mathcal K( z )=\left\{ \begin{bmatrix} A& a\\ (B^i)& (\sigma^i(A))\end{bmatrix} :  \tr (B^i)=z^i \; (i=1,\dots,m)\right \}.
\end{equation}

We consider  the   (space-time)   {\em nonhomogeneous} partial differential inclusion
\begin{equation}\label{sys2}
\nabla  w(x,t)\in \mathcal K( u (x,t)) \quad a.e.\; (x,t)\in\Omega_T,
\end{equation}
which is equivalent to the first-order differential system
\begin{equation}\label{sys21}
u^i=\dv v^i,\;\; v^i_t=\sigma^i(Du)  \quad a.e.\; (x,t)\in\Omega_T 
\end{equation}
for $i=1,\dots,m.$ Note that any solution $w=[u,(v^i)]$ of inclusion (\ref{sys2}) or system (\ref{sys21}) would produce  a weak solution $u$ of equation (\ref{sys0}).
 
 There are primarily two approaches for studying partial differential inclusions.  One   is the generalization of {Gromov}'s convex integration method (see 
\cite{G1})   by {M\"uller \&  \v Sver\'ak} \cite{MSv, MSv2}; the other is the {Baire} category
method  developed by {Dacorogna \&  Marcellini}  \cite{DM1, DM} based on  the ideas of  ordinary differential inclusions (see the references in \cite{DM}). Both  methods  rely on  certain approximation and reduction schemes; see also Kirchheim \cite{Ki}, M\"uller \& Sychev  \cite{MSy} and Yan \cite{Y0}. Such  schemes have recently been greatly developed and found remarkable success  in the study of many important problems of partial differential equations; see, e.g., 
 the incompressible Euler equation  by {De Lellis \& Sz\'ekelyhidi} \cite{DS}, the porous media equation by {Cordoba,  Faraco \& Gancedo} \cite{CFG}, the active scalar equation by {Shvydkoy} \cite{Sh},   the  2-D Monge-Amp\`ere equations by  {Lewicka \& Pakzad} \cite{LP} and, most recently, the proof of Onsager's conjecture by {Isett} \cite{Is} and the nonuniqueness  of weak solutions of the Navier-Stokes equation by {Buckmaster \&  Vicol} \cite{BV}.

One difficulty concerning our differential inclusion (\ref{sys2}), as in  the single unknown function cases of  \cite{KY1,KY2,KY3,Zh1,Zh2},  is that the sets $\mathcal K(z)$ are not constant (thus the inclusion is called {\em nonhomogeneous}) and their convex hulls contain no interior points in the full matrix space; this prevents one from applying the general existence results of \cite{MSy}. Another difficulty concerning (\ref{sys2}) is that when $m,n\ge 2$ the set $\mathcal K(z)$ has  a trivial {\em lamination hull} under  the strong parabolicity condition (\ref{mono}), which makes  the constructions  based on rank-one connections unfeasible for (\ref{sys2}) in the system case. To overcome this difficulty, we explore  the more relevant  {\em rank-one convex hull} and  {\em $T_N$-configurations} concerning set  $\mathcal K(z)$ (see Section \ref{s-TN}).
We introduce a structural condition on  the diffusion flux function $\sigma$, called {\em Condition (OC)} (Definition \ref{cond-C}); the precise definition of this condition along with further discussions will be  given in Section \ref{section-tau_N}, but  we first state our main general existence theorem under this condition. 

\begin{thm} \label{thm-main-1} Let $\sigma\colon \M^{m\times n}\to \M^{m\times n}$ be continuous and satisfy  the {\em Condition  (OC)} with open set $\Sigma \subset \M^{m\times n}\times (\R^n)^m$  as given in Definition \ref{cond-C}.  Let $\bar  u \in C^1(\bar\Omega_T;\R^m)$ and $\bar v^1,\dots, \bar  v ^m\in C^1(\bar\Omega_T;\R^n)$ satisfy
\begin{equation}\label{suff-0}
\bar u^i=\dv \bar  v ^i \;\; (i=1,\dots,m), \quad [D\bar  u ,\,(\bar  v ^i_t)]\in \Sigma  \quad \mbox{on $\,\bar\Omega_T.$}
\end{equation}
 Then there exists a sequence $ \{u_\alpha\}$ of weak solutions of   diffusion system $(\ref{sys0})$ in $W^{1,\infty}(\Omega_T;\R^m)$  satisfying $u_\alpha |_{\partial \Omega_T}=\bar u$  that converges weakly* to $ \bar  u$ in $W^{1,\infty}(\Omega_T;\R^m)$ as $\alpha\to\infty.$
  \end{thm}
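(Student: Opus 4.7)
The plan is to apply the Baire category framework of Dacorogna--Marcellini (equivalently, an iterative convex integration scheme in the spirit of \cite{MSv,KY1,Zh1}) to the space-time differential inclusion (\ref{sys2}), exploiting Condition (OC) to supply rank-one oscillations via the $T_N$-configurations associated with $\mathcal{K}(z)$. First, I would introduce the admissible class of strict subsolutions
\[
\mathcal{A}=\bigl\{w=[u,(v^i)]\in \bar w+W^{1,\infty}_0(\Omega_T;\R^m\times(\R^n)^m):\; u^i=\dv v^i,\ [Du,(v^i_t)]\in\Sigma \text{ a.e.}\bigr\},
\]
equipped with a complete metric compatible with weak* convergence in $W^{1,\infty}$ (for instance, a strong $L^\infty$ metric on $(u,v^i)$ restricted to a $W^{1,\infty}$-bounded subset, using that $\Sigma$ is open). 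The hypothesis (\ref{suff-0}) ensures that $\bar w\in\mathcal{A}$, so $\mathcal{A}$ is nonempty, and a uniform bound on $\nabla w$ follows from the compactness of $[D\bar u,(\bar v^i_t)](\bar\Omega_T)$ inside $\Sigma$ together with the perturbation procedure below.

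The core step is the \emph{perturbation (in-approximation) lemma}: for any $w\in\mathcal{A}$ whose gradient block $[Du,(v^i_t)]$ is still at positive distance from the target surface $\mathcal{K}(u)$ on some set of positive measure, and any open $U\subset\Omega_T$ meeting that set, I would construct a compactly supported Lipschitz perturbation $\varphi$ on $U$ such that $w+\varphi\in\mathcal{A}$, $\|\varphi\|_{L^\infty}$ is arbitrarily small, and $\int_U\dist^2(\nabla(w+\varphi),\mathcal{K}(u+\varphi^{(1)}))\,dxdt$ is strictly smaller than the corresponding integral for $w$ by a definite amount. The perturbation is built from localized plane-wave oscillations along the rank-one directions supplied by a $T_N$-configuration within $\Sigma$ whose vertices lie close to $\mathcal{K}(z)$; the block structure (\ref{mX}) guarantees that such oscillations automatically preserve the linear constraints $u^i=\dv v^i$ encoded as a first-order differential relation, so the perturbed map remains in $\mathcal{A}$.

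With this lemma in hand, I would define the functional
\[
J(w)=\int_{\Omega_T}\dist^2\bigl(\nabla w(x,t),\,\mathcal{K}(u(x,t))\bigr)\,dxdt\quad\text{on }\mathcal{A}.
\]
A standard argument shows $J$ is Baire-1 on $(\mathcal{A},d)$ (upper semicontinuous for weak* convergence, continuous for strong $L^\infty$ convergence) so its continuity set is residual; the perturbation lemma implies that every continuity point satisfies $J(w)=0$, hence is an exact solution of (\ref{sys2}). Applying this to a countable base of $d$-balls centered near $\bar w$ produces a sequence $w_\alpha\in\mathcal{A}$ with $J(w_\alpha)=0$ and $w_\alpha\to\bar w$ in the $d$-metric, which by construction gives $u_\alpha\wcon^*\bar u$ in $W^{1,\infty}(\Omega_T;\R^m)$ with $u_\alpha|_{\partial\Omega_T}=\bar u$; by (\ref{sys21}), each $u_\alpha$ is a weak solution of (\ref{sys0}).

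The main obstacle is the perturbation step. The set $\mathcal{K}(z)$ is both nonhomogeneous ($z$-dependent) and, under the mere parabolicity condition (\ref{mono}), has a trivial lamination hull when $m,n\ge 2$, ruling out naive rank-one constructions. Condition (OC) is precisely the structural hypothesis that restores a usable rank-one convex hull structure through $T_N$-configurations of $\mathcal{K}(z)$ depending continuously on $z$. The delicate part of executing the perturbation is therefore threefold: (i) choosing a $T_N$-configuration whose vertices and barycenter sit in $\Sigma$ so that post-perturbation admissibility is preserved; (ii) localizing the plane-wave building blocks in space-time while respecting the $u^i=\dv v^i$ constraint across cut-offs (which, as in \cite{MSv,KY1}, typically requires a careful potential/cofactor-type correction with error controllable in $L^\infty$); and (iii) ensuring a quantitative decrease in $J$ proportional to the initial defect, uniform enough to drive the Baire argument.
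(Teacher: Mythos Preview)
Your overall strategy---Baire category on an $L^\infty$-complete class of strict subsolutions, with the perturbation step driven by the $T_N$-configurations supplied by Condition~(OC)---is exactly the paper's. The paper, however, organizes the argument somewhat differently. It does \emph{not} carry the full pair $w=[u,(v^i)]$ in the metric space; instead it runs Baire on $u$ alone (Theorem~\ref{baire-1}), taking $\mathcal X$ to be the $L^\infty$-closure of a bounded $\mathcal U\subset W^{1,\infty}_{\bar u}$ and showing that $\nabla\colon(\mathcal X,L^\infty)\to L^2$ is Baire-one by mollification. The defect is measured as $\|u_t-\dv\sigma(Du)\|_{H^{-1}}$, which by Lemma~\ref{lem3} is controlled by $\|v_t-\sigma(Du)\|_{L^2}$ for \emph{any} auxiliary $v^i$ with $\dv v^i=u^i$; the density step (Theorem~\ref{density1}) then works with piecewise $C^1$ subsolutions on a finite partition. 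Decoupling $u$ from the auxiliary $v^i$ means one never has to push the full inclusion $\nabla w\in\mathcal K(u)$ to the limit, only the weak PDE. Your scheme would instead yield exact solutions of the differential inclusion, at the price of a larger configuration space and a more delicate closure. (Incidentally, your $J$ is weak*-\emph{lower} semicontinuous, not upper; the Baire-one property comes, as in the paper, from the gradient map.)

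There is one substantive misconception. You write that ``the block structure \eqref{mX} guarantees that such oscillations automatically preserve the linear constraints $u^i=\dv v^i$''; it does not. Admissibility means $\beta^i\cdot\alpha=0$, so the building block $\omega=[\varphi,(\psi^i)]$ of Lemma~\ref{lem1} has $\dv\psi^i=0$, whereas $\varphi^i$ is generically nonzero. Hence $\dv(v^i+\psi^i)=u^i\neq u^i+\varphi^i$ already for the pure plane wave, before any cut-off---this is precisely the nonhomogeneity of $\mathcal K(z)$ in $z=u$. The paper repairs it by adding a further correction $\tilde g^i=\mathcal R_{\bar y,l}\varphi^i$ built from an explicit right inverse of the spatial divergence on cubes (Lemma~\ref{div-inv} and Corollary~\ref{div-inv-1}); the crucial estimate is that under the rescaling $\mathcal L_{\bar y,l}$ one has $\|\tilde g^i_t\|_{L^\infty}\le C_n\,l\,\|\tilde\varphi_t\|_{L^\infty}$, so this extra term can be made negligible in all the estimates by taking $l$ small (Steps~3--6 of the proof of Theorem~\ref{density1}). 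Your item~(ii) anticipates a correction but misattributes its origin to the cut-off; the smallness mechanism is the $l$-rescaling of the divergence inverse, not a cofactor identity.
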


This result follows by combining Theorem \ref{baire-1} and Theorem \ref{density1} to be proved later.
We remark that  condition (\ref{suff-0}) provides  some  {\em relaxation}   for  inclusion  (\ref{sys2}) or system (\ref{sys21}); in this connection, any  function  $\bar u$ as described by  (\ref{suff-0}) will be  called a {\em subsolution} of  diffusion system (\ref{sys0}). 

The following useful result is immediate.

\begin{pro}\label{subsolns} Assume $\Sigma \subset \M^{m\times n}\times (\R^n)^m$ is a nonempty open set.   Let 
$u_0 \in C^1( \bar\Omega;\R^m)$ and $v_0^i,\, f_0^i\in C^1(\bar\Omega;\R^n)$ $(i=1,\dots ,m)$ satisfy 
\begin{equation}\label{suff-1}
u_0^i=\dv v_0^i,\quad \dv f_0 ^i=0,\quad [D u_0,\,(f_0^i)]\in \Sigma  \quad \mbox{on $\,\bar\Omega$.}
\end{equation}
Assume $h^i\in C^2(\bar\Omega;\R^n)$, $g^i=\dv h^i$ $(i=1,\dots,m)$ and let $\bar  u \in C^1(\bar\Omega_T;\R^m)$  and $\bar  v ^i \in C^1(\bar\Omega_T;\R^n)$   
 be defined  by
\begin{equation}\label{suff-2}
 \bar u^i(x,t)=u^i_0(x) + \epsilon g^i(x) t, \quad  \bar v^i(x,t)=v_0^i (x)+ f_0^i (x) t + \epsilon h^i(x) t.
\end{equation}
Then, if $|\epsilon|$ is  sufficiently small, $\bar u$ and $\bar v^i$ satisfy condition $(\ref{suff-0})$. 
\end{pro}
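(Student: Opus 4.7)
The plan is to verify the two parts of condition (\ref{suff-0}) in turn, both of which reduce to direct calculation plus, for the inclusion part, a standard compactness/openness argument.

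First I would handle the divergence identity $\bar u^i=\dv \bar v^i$ by straightforward differentiation in $x$. From the definition,
\[
\dv \bar v^i(x,t)=\dv v_0^i(x)+(\dv f_0^i(x))\,t+\epsilon(\dv h^i(x))\,t.
\]
Using the three hypotheses of (\ref{suff-1})/the statement, namely $\dv v_0^i=u_0^i$, $\dv f_0^i=0$, and $g^i=\dv h^i$, this collapses to $u_0^i(x)+\epsilon g^i(x)\,t=\bar u^i(x,t)$, which is exactly what we need. This step requires nothing but the smoothness hypotheses on $v_0^i$, $f_0^i$, $h^i$ so that the operations are legitimate.

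Next I would verify the pointwise inclusion $[D\bar u,(\bar v^i_t)]\in\Sigma$ on $\bar\Omega_T$, where $\epsilon$ is to be chosen small. A direct computation gives
\[
D\bar u^i(x,t)=Du_0^i(x)+\epsilon t\,Dg^i(x),\qquad \bar v^i_t(x,t)=f_0^i(x)+\epsilon h^i(x),
\]
so
\[
[D\bar u,(\bar v^i_t)](x,t)=[Du_0(x),(f_0^i(x))]+\epsilon\bigl[\,t\,Dg(x),\,(h^i(x))\,\bigr].
\]
At $\epsilon=0$ the value equals $[Du_0(x),(f_0^i(x))]$, which by hypothesis lies in the open set $\Sigma$ for every $x\in\bar\Omega$. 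Since the map $x\mapsto[Du_0(x),(f_0^i(x))]$ is continuous on the compact set $\bar\Omega$, its image $K$ is a compact subset of $\Sigma$, hence has a positive distance $\delta>0$ from the complement of $\Sigma$.

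Finally I would close the argument with a uniform smallness estimate. Since $g^i=\dv h^i\in C^1(\bar\Omega)$ (because $h^i\in C^2$), $Dg^i$ and $h^i$ are bounded on $\bar\Omega$; together with $t\in[0,T]$ bounded, there is a constant $C$ (depending on the $C^1$ norms of $Dg^i$ and $h^i$ and on $T$) so that
\[
\bigl\|\epsilon\bigl[\,t\,Dg(x),\,(h^i(x))\,\bigr]\bigr\|\le C|\epsilon|\qquad\text{on }\bar\Omega_T.
\]
Choosing $|\epsilon|<\delta/C$ guarantees $[D\bar u,(\bar v^i_t)](x,t)\in\Sigma$ everywhere, finishing the proof. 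There is no real obstacle here; the only substantive point is that $\Sigma$ is open and the unperturbed subsolution lies in a compact piece of it, so a small perturbation stays inside.
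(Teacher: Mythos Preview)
Your proof is correct and is exactly the direct verification the paper has in mind; indeed, the paper states the proposition as ``immediate'' and gives no explicit proof, so your computation of $\dv\bar v^i=\bar u^i$ together with the compactness/openness argument for the inclusion $[D\bar u,(\bar v^i_t)]\in\Sigma$ is precisely what is intended. One cosmetic point: in the final estimate the constant $C$ depends on the $L^\infty$ norms of $Dg$ and $h^i$ (not on a $C^1$ norm of $Dg$), but this does not affect the argument.
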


 From Theorem \ref{thm-main-1}, we obtain the  nonuniqueness and instability result concerning problem $(\ref{ibvp-1}).$ 
 
\begin{thm}\label{instability-thm} Let $\sigma\colon \M^{m\times n}\to \M^{m\times n}$ be continuous and satisfy the  {\em Condition  (OC)} with open set $\Sigma \subset \M^{m\times n}\times (\R^n)^m.$    Assume  that $u_0 \in C^1( \bar\Omega;\R^m)$ satisfies the stationary equation  $
\dv \sigma (Du_0)=0$ weakly  in $\Omega$ and that there exist  $v_0^i,\, f_0^i\in C^1(\bar\Omega;\R^n)$ $(i=1,\dots ,m)$ such that  condition $(\ref{suff-1})$ holds.  Then problem $(\ref{ibvp-1})$  possesses a weakly* convergent sequence  of Lipschitz weak solutions whose limit   is not a weak solution itself.
\end{thm}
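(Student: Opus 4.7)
The plan is to manufacture $\bar u$ as a subsolution via Proposition~\ref{subsolns}, invoke Theorem~\ref{thm-main-1} to produce an approximating sequence of Lipschitz weak solutions attaining the prescribed initial-boundary data, and then verify that $\bar u$ itself cannot be a weak solution because of the stationarity of $u_0$.

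First, I would choose $h^i\in C^2_c(\Omega;\R^n)$ for $i=1,\dots,m$ so that at least one of the scalar functions $g^i:=\dv h^i$ is not identically zero on $\Omega$; since each $h^i$ is compactly supported in $\Omega$, each $g^i$ vanishes on a neighborhood of $\partial\Omega$. Using the data $u_0,\,v_0^i,\,f_0^i$ supplied by the hypothesis together with these $h^i,g^i$, I define $\bar u,\,\bar v^i$ by the formulas in $(\ref{suff-2})$. For $|\epsilon|$ sufficiently small, Proposition~\ref{subsolns} guarantees condition $(\ref{suff-0})$, so $(\bar u,\bar v^i)$ is a subsolution admissible to Theorem~\ref{thm-main-1}.

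Applying Theorem~\ref{thm-main-1} yields a sequence $\{u_\alpha\}\subset W^{1,\infty}(\Omega_T;\R^m)$ of weak solutions of $(\ref{sys0})$ with $u_\alpha|_{\partial\Omega_T}=\bar u|_{\partial\Omega_T}$ and $u_\alpha\wcon^* \bar u$. The trace of $\bar u$ on the parabolic boundary agrees with $u_0$: at $t=0$ one reads $\bar u(x,0)=u_0(x)$ directly from $(\ref{suff-2})$, while on $\partial\Omega\times[0,T]$ one has $\bar u(x,t)=u_0(x)+\epsilon g(x)t=u_0(x)$ since $g^i|_{\partial\Omega}=0$. Therefore each $u_\alpha$ is a weak solution of the full initial-boundary value problem $(\ref{ibvp-1})$.

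It remains to show $\bar u$ is not a weak solution of $(\ref{sys0})$. Testing the weak formulation against separable $\phi(x,t)=\psi(x)\eta(t)$ with $\psi\in C^\infty_c(\Omega),\,\eta\in C^\infty_c((0,T))$, integrating by parts in $t$, and using $\bar u^i_t=\epsilon g^i(x)$ (independent of $t$), I extract
\[
\int_0^T \eta(t)\Big(\epsilon\int_\Omega g^i\psi\,dx+\int_\Omega \sigma^i(Du_0+\epsilon t\,Dg)\cdot D\psi\,dx\Big)\,dt=0
\]
for every such $\eta$, hence the bracketed quantity vanishes for a.e.\ $t\in(0,T)$ and, by continuity in $t$, for $t=0$ as well. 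The hypothesized weak stationarity $\dv\sigma(Du_0)=0$ kills the second integral at $t=0$, forcing $\epsilon\int_\Omega g^i\psi\,dx=0$ for all $\psi\in C^\infty_c(\Omega)$, whence $g^i\equiv 0$, contradicting the choice of $h^i$. The only delicate point is the dual requirement that $g^i$ vanish on $\partial\Omega$ (to match the boundary data) yet be nontrivial on $\Omega$ (to defeat the stationarity identity); once this perturbation is in place, everything else is a direct assembly of Proposition~\ref{subsolns} and Theorem~\ref{thm-main-1}.
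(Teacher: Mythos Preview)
Your proof is correct and follows essentially the same route as the paper: build $\bar u,\bar v^i$ via Proposition~\ref{subsolns} with compactly supported $h^i$ so that the parabolic boundary data reduce to $u_0$, apply Theorem~\ref{thm-main-1}, and then rule out $\bar u$ as a weak solution by evaluating the equation at $t=0$ against the stationarity of $u_0$. The only cosmetic differences are that the paper fixes $g^i\equiv 1$ on a subdomain $\Omega'\subset\subset\Omega$ (whereas you merely require some $g^i\not\equiv 0$) and that you should state explicitly $\epsilon\neq 0$ before concluding $g^i\equiv 0$; your limiting argument in $t$ is in fact a slightly more careful justification of the paper's one-line claim that $\epsilon g\neq \dv\sigma(Du_0+\epsilon tDg)$.
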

\begin{proof} Define  $\bar  u$ and $\bar  v^i$ as in  Proposition \ref{subsolns} with  $h^i \in C^\infty_c(\Omega;\R^n)$ and $g^i=\dv  h^i$ satisfying 
$g^i(x)=\dv h^i(x)=1$ on some fixed nonempty open set $\Omega'\subset\subset \Omega$ for all $i=1, \dots,m.$  Then  $\bar u(x,t)$  satisfies  
\[
\bar u (x,t)=u_0(x) \;\;  (x\in \partial \Omega, \, 0<t<T),\quad \bar u (x,0)=u_0(x)\;\; (x\in \Omega).
\]
Also, for all sufficiently small $|\epsilon|> 0$, condition (\ref{suff-0}) is satisfied. 
 Thus, each solution $u_\alpha$ as determined in  Theorem \ref{thm-main-1} solves problem (\ref{ibvp-1}), but  the weak* limit $\bar u$ is not a weak solution of   (\ref{sys0}) because   
\[
\epsilon g(x)\ne \dv \sigma(Du_0(x) + \epsilon t Dg(x)) \quad \mbox{in $\Omega_T,$} 
\]
as $u_0$ is a stationary solution and $g(x)=1$ on $\Omega'.$
 \end{proof}
 
 \begin{remk} Since the weak* limit is not a weak solution, the Lipschitz weak solutions  in the  sequence of the theorem will eventually be all distinct and  different from the stationary solution $u_0(x).$ This shows that under the Condition (OC) the initial-boundary value problem (\ref{ibvp-1}) is highly {\em ill-posed}; this turns out to be the case even  for certain strongly polyconvex gradient flows, as will be proved in \cite{Y1}.
 \end{remk}
 
 \begin{cor}\label{main-cor-0} Let $\sigma\colon \M^{m\times n}\to \M^{m\times n}$ be continuous and satisfy  the {\em Condition  (OC)} with open set $\Sigma \subset \M^{m\times n}\times (\R^n)^m.$    Assume $[A,(b^i)] \in \Sigma.$ Then the initial-boundary value problem
 \begin{equation}\label{ibvp-g}
 \begin{cases}  u _t=\dv \, \sigma(D u ) & \mbox{in $\Omega_T$,}
\\
\hfill u (x,t)=Ax & (x\in \partial \Omega,\, 0<t<T),\\
 \hfill u (x,0)=Ax & (x\in \Omega),
\end{cases}
\end{equation} 
possesses a weakly* convergent sequence  of Lipschitz weak solutions whose   limit   is not a weak solution itself.
\end{cor}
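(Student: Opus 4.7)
The plan is to reduce Corollary \ref{main-cor-0} to Theorem \ref{instability-thm} by exhibiting, for the linear initial-boundary datum $u_0(x)=Ax$, all the auxiliary data required by that theorem. Concretely, I need to (i) verify that $u_0$ is a weak stationary solution of $\dv\sigma(Du_0)=0$ in $\Omega$, and (ii) produce $v_0^i,f_0^i\in C^1(\bar\Omega;\R^n)$ satisfying condition (\ref{suff-1}).

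For (i), note that $Du_0\equiv A$ is a constant matrix on $\Omega$, so $\sigma(Du_0)\equiv \sigma(A)$ is a constant vector field in $\M^{m\times n}$; therefore $\dv \sigma(Du_0)=0$ holds pointwise (hence weakly) in $\Omega$.

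For (ii), I would choose $f_0^i\equiv b^i$ as a constant vector field on $\bar\Omega$. Then trivially $f_0^i\in C^1(\bar\Omega;\R^n)$, $\dv f_0^i=0$, and $[Du_0,(f_0^i)]=[A,(b^i)]\in \Sigma$ by the hypothesis of the corollary. It remains to write each component $u_0^i(x)=\sum_{k=1}^n A^i_k x_k$ as a divergence. A direct choice is to define $v_0^i\in C^1(\bar\Omega;\R^n)$ componentwise by $(v_0^i)_k(x)=\tfrac{1}{2}A^i_k x_k^2$ (no sum on $k$), so that
\[
\dv v_0^i(x)=\sum_{k=1}^n \partial_{x_k}\!\bigl(\tfrac{1}{2}A^i_k x_k^2\bigr)=\sum_{k=1}^n A^i_k x_k=u_0^i(x).
\]
With these choices, all of (\ref{suff-1}) holds, so Theorem \ref{instability-thm} applies and yields a weakly* convergent sequence of Lipschitz weak solutions of (\ref{ibvp-g}) whose limit is not a weak solution.

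There is no real obstacle here: the corollary is a direct specialization of Theorem \ref{instability-thm} to constant-gradient initial data, and the only mildly nontrivial step is recording an explicit primitive $v_0^i$ for the linear function $u_0^i$, which is immediate. The role of the openness of $\Sigma$, used elsewhere in Proposition \ref{subsolns} to absorb the small perturbations, is automatically consistent with the choice $[Du_0,(f_0^i)]=[A,(b^i)]\in \Sigma$ made above.
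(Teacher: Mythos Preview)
Your proposal is correct and matches the paper's own proof essentially verbatim: the paper also takes $u_0(x)=Ax$, $f_0^i\equiv b^i$, and $v_0^i(x)=\tfrac{1}{2}(a_{i1}x_1^2,\dots,a_{in}x_n^2)$, then invokes Theorem \ref{instability-thm}. There is nothing to add.
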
 
 \begin{proof} The result follows easily from Theorem \ref{instability-thm} by choosing $u_0(x)=Ax$, $f_0^i(x)=(b_1^i,\dots,b^i_n)$ and $v_0^i(x)= \frac12 (a_{i1}x_1^2,\dots,a_{in}x^2_n).$
 \end{proof}
  
\section{Rank-one convex hull and admissible $T_N$-configurations}\label{s-TN}

\subsection{Rank-one  hull and $T_N$-configurations} We review some useful definitions and refer to {Dacorogna} \cite{D}  for further related results and details.  
 
 Let $p,q\in\mathbb N.$  A function $g\colon \M^{p\times q}\to \R$ is said to be {\em rank-one convex} if for all given $X,Y$ in $\M^{p\times q}$ with $\rank Y=1$ the function $h(t)=g(X+tY)$ is convex in $t\in \R.$ 

For a compact set $S$ in $\M^{p\times q}$, we define the {\em rank-one convex hull} of $S$  by
\[
S^{rc}=\cap\{g^{-1}(0): \mbox{$g\ge 0$ is rank-one convex and $g|_{S}=0$}\},
\]
and, for general sets $E\subset \M^{p\times q}$, we define
\[
E^{rc}=\cup\{S^{rc}: \mbox{$S\subset E$ and is compact}\}.
\]
Recall that the {\em convex hull} of  set $E\subset \M^{p\times q}$ is (equivalently) defined by
\[
E^c=\{\sum_{j=1}^{pq+1}\lambda_jX_j:  0\le\lambda_j\le 1,\; \sum_{j=1}^{pq+1} \lambda_j=1,\; X_j\in E\}.
\]
Then $E^{rc}\subseteq E^c.$ 
Now let  $E^{lc,0}=E$  and define inductively, 
\[
E^{lc,i+1}=E^{lc,i}\cup \{\lambda X+(1-\lambda)Y : \lambda\in (0,1),\, X,Y\in E^{lc,i},\, \rank(X-Y)= 1\}
\]
for $i=0,1,\dots;$ that is, $E^{lc,i+1}=(E^{lc,i})^{lc,1}.$ The {\em lamination  hull} of $E$ is then defined by
\[
E^{lc}=\cup_{i=0}^\infty E^{lc,i}.
\]
One  has  that $E^{lc}\subseteq E^{rc}\subseteq  E^c.$  The famous  {\em $T_4$-configuration} of Tartar \cite{T} shows a set $E$ may contain  no rank-one connections (i.e., $\rank(X-Y)\ne 1$ for all $X\ne Y$ in $E$) and thus $E^{lc}=E,$  but its rank-one convex hull $E^{rc}$ can be  strictly larger than $E.$ 
 
We have the following general definition; see, e.g.,  \cite{KMS}.

\begin{defn}[$T_N$-configuration]\label{T_N} Let  $N\ge 2$ and $ X_1, \dots,X_N\in \M^{p\times q}.$  We say that the  $N$-tuple $(X_1,X_2,\dots,X_N)$  is a {\em $T_N$-configuration} if  there exist matrices $P, \, C_1, \dots,C_N$ in $\M^{p\times q}$ and real numbers $\kappa_1,\dots,\kappa_N$, with  $\rank(C_j)=1,\, 
 \sum_{j=1}^N C_j=0$ and $\kappa_j>1,$ such that 
 \begin{equation}\label{TN}
\begin{cases}
X_1   =   P+\kappa_1C_1,\\
X_2   =  P+C_1+\kappa_2C_2,\\
 \quad  \vdots   \\
X_N   =  P+C_1+\dots +C_{N-1}+\kappa_NC_N.
\end{cases}
\end{equation}
In this case,  define $P_1=P, \; P_j=P+C_1+\dots +C_{j-1}$ for $j=2,\dots,N$, and
\begin{equation}\label{TTN}
 T (X_1,\dots,X_N)=\cup_{j=1}^N (X_j,P_j),
\end{equation}
where  $(X_j,\,P_j)=\{(1-\lambda) X_j+ \lambda P_j\colon 0<\lambda < 1\}$.  Let $\bar{ T }(X_1,\dots,X_N)=\cup_{j=1}^N [X_j,P_j]$  be the closure of $ T (X_1,\dots,X_N).$  
\end{defn}

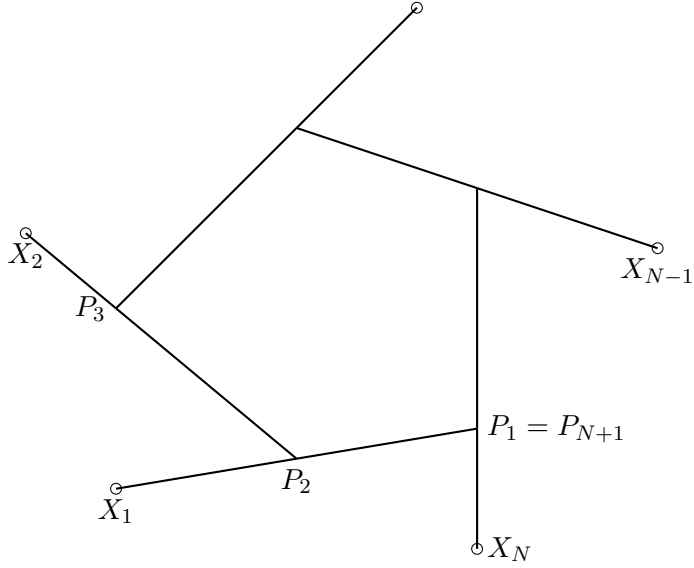
\begin{figure}[ht]
\begin{center}
\begin{tikzpicture}[scale =0.8]
\draw[thick] (-5,-2)--(1,-1);
  \draw(-5,-2) node[below]{$X_1$};
   \draw(1,-1) node[right]{$P_1=P_{N+1}$};
     \draw[thick] (1,-1)--(1,3);
  \draw[thick] (1,-1)--(1,-3);
  \draw[thick] (-2,-1.5)--(-5,1);
   \draw[thick] (-6.5,2.25)--(-5,1);
   \draw[thick] (-5,1)--(-2,4);
     \draw[thick] (0,6)--(-2,4);
    \draw[thick] (-2,4)--(1,3);
       \draw[thick] (1,3)--(4,2);
     \draw(1,-3) node[right]{$X_{N}$};
        \draw(-5,1) node[left]{$P_3$};
         \draw(-2,-1.5) node[below]{$P_2$};
    \draw(4,2) node[below]{$X_{N-1}$};
   \draw(-6.5,2.25) node[below]{$X_2$};
       \draw (-5,-2) circle (0.09);
        \draw(4,2) circle (0.09);
         \draw(1,-3) circle (0.09);
          \draw(-6.5,2.25) circle (0.09);
           \draw(0,6) circle (0.09);
\end{tikzpicture}
\end{center}
\caption{A general $T_N$-configuration $(X_1,X_2,\dots,X_N)$ determined by $P=P_1=P_{N+1}$ and $C_j=P_{j+1}-P_j$ for $j=1,\dots,N.$} 
\label{fig1}
\end{figure}

\begin{remk}   Here, we allow $N=2$ and  the set $\{X_1,X_2,\dots,X_N\}$ to contain   rank-one connections.  With  $P_{N+1}=P_1$, one has 
$P_{j+1}=\frac{1}{\kappa_j}X_j+(1-\frac{1}{\kappa_j})P_j$ for all $j=1,\dots,N;$ hence, there exist  numbers $\nu_i^j\in (0,1)$ for $i,j=1,2,\dots,N$ such that 
\begin{equation}\label{P-j}
\sum_{i=1}^N \nu_i^j=1,\quad P_j=\sum_{i=1}^N \nu_i^j X_i \quad (j=1,2,\dots,N).
\end{equation}
For example, when $j=1$, if $\lambda_i=1-\frac{1}{\kappa_i}$ then
\begin{equation}\label{nu_i^1}
\nu_i^1=  \frac{(1- \lambda_i){\lambda_{i+1} \cdots  {\lambda_N}}}{1-\lambda_1\cdots\lambda_N}  \quad   \forall\, i=1,\dots,N-1; \; \; \nu_N^1=\frac{1-\lambda_N}{1-\lambda_1\cdots\lambda_N}.
\end{equation}
Consequently, each $Y\in T(X_1,\dots,X_N)$ can be written as a specific convex combination of $X_i$'s: $Y=\sum_{i=1}^N \nu_i X_i,$ with $\nu_i$'s being the specific numbers determined by the numbers $\nu_i^j$ in  (\ref{P-j}).
\end{remk}
 
\begin{lem} Let $(X_1,X_2,\dots,X_N)$ be a $T_N$-configuration in $\M^{p\times q}$ given by $(\ref{TN})$. Then
$
\bar{ T }(X_1,\dots,X_N)\subset \{X_1,\dots,X_N\}^{rc}.$ Moreover, if $1<\kappa_j'<\kappa_j$  and $X_j'=P_j+\kappa_j' C_j$ for  $j=1,\dots,N$, then the $N$-tuple $(X'_1,X_2',\dots,X_N')$ is also  a $T_N$ configuration, and $
\bar{ T }(X'_1,\dots,X_N')\subset  T (X_1,\dots,X_N).$ 
\end{lem}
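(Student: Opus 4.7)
The plan is to treat the two assertions separately. For the first inclusion $\bar T(X_1,\dots,X_N)\subset\{X_1,\dots,X_N\}^{rc}$, I will appeal to the defining separation property of $K:=\{X_1,\dots,X_N\}^{rc}$: every nonnegative rank-one convex function $g$ vanishing on $\{X_1,\dots,X_N\}$ must vanish throughout $K$. For the second, I will exhibit the modified tuple as a $T_N$-configuration sharing the same $P$ and directions $C_j$ as the original, so that the claimed inclusion reduces to an elementary comparison of segments parametrized along each $C_j$.

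For the first claim, combining $X_j=P_j+\kappa_j C_j$ with $P_{j+1}=P_j+C_j$ gives the cyclic identity
\[
P_{j+1}=\Bigl(1-\tfrac{1}{\kappa_j}\Bigr)P_j+\tfrac{1}{\kappa_j}X_j\qquad(j=1,\dots,N,\ P_{N+1}=P_1),
\]
together with $\rank(X_j-P_j)=\rank(\kappa_j C_j)=1$. Given any rank-one convex $g\ge 0$ with $g(X_i)=0$ for all $i$, rank-one convexity along the segment from $P_j$ to $X_j$ then yields $g(P_{j+1})\le(1-1/\kappa_j)\,g(P_j)$. Iterating around the full cycle produces
\[
g(P_1)\le \prod_{j=1}^{N}\Bigl(1-\tfrac{1}{\kappa_j}\Bigr)g(P_1);
\]
since the product is strictly less than $1$ and $g(P_1)\ge 0$, this forces $g(P_1)=0$, and then $g(P_j)=0$ for every $j$. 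Hence $\{P_1,\dots,P_N\}\subset K$. For each $j$ the segment $[X_j,P_j]$ is rank-one with both endpoints in $K$; restricting $g$ to the line through $X_j$ and $P_j$ yields a nonnegative convex function of one variable vanishing at both endpoints, hence identically, so $[X_j,P_j]\subset K$, completing the first claim.

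For the second claim, I would take $P':=P$, $C'_j:=C_j$, and the given scalars $\kappa'_j$; these at once satisfy the structural conditions of Definition \ref{T_N}, so $(X'_1,\dots,X'_N)$ is a $T_N$-configuration whose partial sums coincide with $P_j$, and $X'_j=P_j+\kappa'_j C_j$. Parametrizing each segment along $C_j$, I have
\[
[X'_j,P_j]=\{P_j+sC_j:0\le s\le \kappa'_j\},\qquad (X_j,P_j)=\{P_j+sC_j:0<s<\kappa_j\}.
\]
Since $\kappa'_j<\kappa_j$, the portion $0<s\le\kappa'_j$ is contained in $(X_j,P_j)$, and the one remaining point $s=0$, namely $P_j$ itself, is absorbed into the adjacent open segment via the cyclic identity $P_j=P_{j-1}+C_{j-1}$, which corresponds to the interior value $s=1\in(0,\kappa_{j-1})$ in $(X_{j-1},P_{j-1})$ because $\kappa_{j-1}>1$. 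The main obstacle is precisely this open/closed bookkeeping at the endpoints $P_j$: rerouting them into the \emph{open} segments of the larger configuration is what makes the strict inequalities $\kappa_j>1$ and the cyclic closure $\sum_j C_j=0$ indispensable.
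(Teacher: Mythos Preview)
Your proof is correct and follows essentially the same approach as the paper's. The only minor variation is that for the first part the paper sums the inequalities $g(P_{j+1})\le(1-1/\kappa_j)g(P_j)$ to force all $g(P_j)=0$ at once, whereas you compose them into a product to first kill $g(P_1)$ and then propagate; for the second part the paper simply declares the inclusion ``obvious,'' while you carefully handle the endpoint $P_j$ via the cyclic identity, which is exactly the right justification.
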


\begin{proof}  Let $g\ge 0$ be  any rank-one convex function on $\M^{p\times q}$ such that $g(X_j)=0$  for all $j=1,\dots,N.$ Then, with $P_{N+1}=P_1$,
\[
g(P_{j+1})\le \frac{1}{\kappa_j}g(X_j)+(1-\frac{1}{\kappa_j})g(P_j)=(1-\frac{1}{\kappa_j})g(P_j)\quad \forall\, j=1,\dots,N,
\]
and thus 
\[
\sum_{j=1}^N g(P_{j}) =\sum_{j=1}^N g(P_{j+1}) \le \sum_{j=1}^N (1-\frac{1}{\kappa_j})g(P_j),
\]
from which it follows that $g(P_j)=0$ for all $j=1,\dots,N.$ Hence, \[
0\le g(\lambda X_j+(1-\lambda)P_j)\le\lambda g(X_j)+(1-\lambda)g(P_j)=0\]
 for all $0\le \lambda\le 1,$ which proves $[X_j,\,P_j]\subseteq g^{-1}(0)$ and thus $\bar{ T }(X_1,\dots,X_N)\subset \{X_1,\dots,X_N\}^{rc}.$ 
 Clearly,  $(X_1',\dots,X_N')$ is a $T_N$-configuration determined by  $P, C_j$ and  $\kappa_j'>1,$ and, since $\kappa_j'<\kappa_j$, it is obvious that $\bar{ T }(X'_1,\dots,X_N')\subset  T (X_1,\dots,X_N).$ 
\end{proof}

\subsection{Admissible $T_N$-configurations in $\M^{(m+nm)\times (n+1)}$}   To  study  the space-time   inclusion (\ref{sys2}), we focus on  the $T_N$-configurations in $\M^{(m+nm)\times (n+1)}.$

\begin{pro}\label{pro1} Let $\sigma \colon \M^{m\times n}\to\M^{m\times n}$ and  define
\begin{equation}\label{set-K0}
\mathcal K =\mathcal K(0)=\left\{ \begin{bmatrix} A& a\\ (B^i)& (\sigma^i(A))\end{bmatrix} :  \tr (B^i)=0 \; (i=1,\dots,m)\right \}.
\end{equation}  
Assume $X=\begin{bmatrix} A & a\\ (B^i) & (b^i)\end{bmatrix}\in  \mathcal K^c;$ then $\tr (B^i) =0$ for all $i=1,\dots,m.$ Moreover, if $\sigma$ satisfies the strong parabolicity condition $(\ref{mono}),$ then    
$\mathcal K^{lc}=\mathcal K.$ 
\end{pro}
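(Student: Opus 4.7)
The plan is to handle the two assertions in order. The first is immediate from the linearity of the trace: every $X \in \mathcal K^c$ can be written as $X = \sum_{j} \lambda_j X_j$ with $X_j \in \mathcal K$, $\lambda_j \ge 0$, $\sum_j \lambda_j = 1$. Looking at the third block-row, $B^i = \sum_j \lambda_j B_j^i$, so $\tr(B^i) = \sum_j \lambda_j \tr(B_j^i) = 0$ since each $X_j \in \mathcal K$ has this trace vanishing. This takes a single line.

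For the second assertion, it suffices to prove $\mathcal K^{lc,1} = \mathcal K$; the rest follows by induction. Thus I would take $X_1, X_2 \in \mathcal K$ with $\rank(X_2 - X_1) = 1$, write $X_2 - X_1 = p \otimes q$, and decompose $p = (r, s^1, \dots, s^m) \in \R^m \times (\R^n)^m$ and $q = (\alpha, \beta) \in \R^n \times \R$. Matching blocks against the definition (\ref{set-K0}) gives
\[
A_2 - A_1 = r \otimes \alpha, \quad a_2 - a_1 = \beta r, \quad B_2^i - B_1^i = s^i \otimes \alpha, \quad \sigma^i(A_2) - \sigma^i(A_1) = \beta s^i.
\]
The trace constraint $\tr(B_2^i - B_1^i) = 0$ reads $s^i \cdot \alpha = 0$ for every $i$.

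Now the key computation: using $\langle \sigma^i(A_2) - \sigma^i(A_1), r_i \alpha \rangle = \beta r_i (s^i \cdot \alpha) = 0$ row by row and summing,
\[
\langle \sigma(A_1 + r \otimes \alpha) - \sigma(A_1), r \otimes \alpha \rangle = \beta \sum_{i=1}^m r_i (s^i \cdot \alpha) = 0.
\]
By the strong parabolicity condition (\ref{mono}), this forces $\nu |r|^2 |\alpha|^2 \le 0$, hence $r = 0$ or $\alpha = 0$. In either case $r \otimes \alpha = 0$, so $A_1 = A_2$ and consequently $\sigma^i(A_1) = \sigma^i(A_2)$. Any convex combination $\lambda X_1 + (1-\lambda) X_2$ therefore has the same $A$-block, the same bottom-right block $\sigma^i(A_1)$, and $B^i$-block with trace $\lambda \cdot 0 + (1-\lambda) \cdot 0 = 0$; this is precisely the condition to lie in $\mathcal K$. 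Hence $\mathcal K^{lc,1} \subseteq \mathcal K$, and equality is trivial.

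The only subtle point is writing the block decomposition of the rank-one tensor product $p \otimes q$ correctly and correctly identifying the Frobenius inner product $\langle \sigma(A_2) - \sigma(A_1), r \otimes \alpha \rangle$ row-by-row; once the indices are set up, the strong parabolicity bound kills the right-hand side through the trace constraint, and the conclusion $A_1 = A_2$ trivializes everything else. No obstacle of substance beyond bookkeeping.
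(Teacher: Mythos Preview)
Your proof is correct and follows essentially the same route as the paper: both write the rank-one difference in block form, use the trace constraint on the $B^i$-blocks to obtain the orthogonality $s^i\cdot\alpha=0$ (the paper's $\beta^i\cdot\alpha=0$), and then compute $\langle\sigma(A_2)-\sigma(A_1),\,r\otimes\alpha\rangle=0$ to invoke strong parabolicity and force $A_1=A_2$. The only differences are notational---your $(r,s^i,\alpha,\beta)$ are the paper's $(p,\beta^i,\alpha,s)$---and the paper phrases the vanishing as $(\sigma^i(A+p\otimes\alpha)-\sigma^i(A))\cdot\alpha=0$ before multiplying by $p_i$ and summing, whereas you do both steps at once.
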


\begin{proof} The conclusion about the convex hull $\mathcal K^c$ follows easily.
To prove the second part, let $Y, Y+C$ be in $\mathcal K,$ with $\rank C=1$. Assume 
\[
Y=\begin{bmatrix} A& a\\ (B^i) & (\sigma^i(A))\end{bmatrix},\quad C=\begin{bmatrix} p\otimes \alpha & sp\\ (\beta^i\otimes \alpha) & (s\beta^i)\end{bmatrix},
\]
where $\tr  (B^i)=0$ and $(|p|+\sum_{i}|\beta^i|)(|\alpha|+|s|)\ne 0.$ From $Y+C\in\mathcal K$  one has
\begin{equation}\label{pf-pro1}
\beta^i\cdot \alpha=0,\quad \sigma^i(A+p\otimes \alpha)=\sigma^i(A)+s\beta^i \;\; (i=1,\dots,m);
\end{equation}
hence $(\sigma^i(A+p\otimes \alpha)-\sigma^i(A))\cdot \alpha=0$ for  $i=1,\dots,m.$ So, by  (\ref{mono}),
\[
0= \sum_{i=1}^m p_i(\sigma^i(A+p\otimes \alpha)-\sigma^i(A))\cdot \alpha =\langle \sigma(A+p\otimes \alpha)-\sigma(A), \,p\otimes \alpha\rangle \ge \nu |p|^2|\alpha|^2.
\]
Thus $|p||\alpha|=0$, and so $p\otimes \alpha=0,$ which by (\ref{pf-pro1}) gives  $s\beta^i=0$ and $C=\begin{bmatrix} 0 & sp\\ (\beta^i\otimes \alpha)& (0)\end{bmatrix}.$ This proves $(Y,Y+C)\subset \mathcal K,$ and so   $\mathcal K^{lc,1}=\mathcal K;$ thus, by induction,  $\mathcal K^{lc,j}=\mathcal K$ for all $j$, which proves   $\mathcal K^{lc}=\mathcal K.$
\end{proof}

Suppose  that $(X_1,\dots,X_N)$ is a $T_N$-configuration determined by $P, C_j$ and $\kappa_j$ and that  $X_j\in \mathcal K$ for each $j$. Then $P_j\in \mathcal K^{rc}\subset \mathcal K^{c}$ for $j=1,\dots,N;$ thus  the rank-one matrices 
\[
C_j= P_{j+1}-P_j =\begin{bmatrix} p_j\otimes \alpha_j & s_jp_j\\ (\beta_j^i\otimes \alpha_j)& (s_j\beta_j^i)\end{bmatrix}\;\; (j=1,\dots,N)
\]
must  satisfy  the  {\em orthogonality} condition:
\begin{equation}\label{orth}
\beta_j^i\cdot \alpha_j=0 \quad (i=1,\dots,m;\; j=1,\dots,N).
\end{equation}

\begin{defn}\label{add-s-t-TN}  A rank-one matrix $C$ in $ \M^{(m+nm)\times (n+1)}$ is called {\em admissible} if it is of the form
\begin{equation}\label{C0}
C=\begin{bmatrix} p\otimes \alpha & sp\\ (\beta^i\otimes \alpha)& (s\beta^i)\end{bmatrix} \quad\mbox{with $ \alpha\ne 0,\; \; \beta^i\cdot \alpha=0 \;\; (i=1,\dots,m).$}
\end{equation}
 A  $T_N$-configuration in 
$\M^{(m+nm)\times (n+1)}$ is called  {\em admissible}   if  all its determining rank-one matrices $C_j$'s  are admissible. 
\end{defn}

\begin{lem}\label{lem1} Let $0<\lambda<1$ and
$C=\begin{bmatrix} p\otimes \alpha & sp\\ (\beta^i\otimes \alpha)& (s\beta^i)\end{bmatrix}$ be admissible. Then, for any bounded open  set $G\subset \R^{n+1}$ and $0<\epsilon<1,$   there exists a function  $\omega=[\varphi, (\psi^i)]\in C^\infty_c(\R^{n+1};\R^m\times (\R^n)^m)$ such that
\begin{itemize}
\item[(a)]  $\supp \varphi \subset \subset G,\;\supp\psi^i \subset\subset  G \, (i=1,\dots,m);$
\item[(b)] $\dv\psi^i=0$ in $\R^{n+1}$ for all $i=1,\dots,m;$
\item[(c)] $\int_{\R^n} \varphi(x,t)\,dx=0$ for all $t\in\R;$
\item[(d)] $\|\omega\|_{L^\infty(\R^{n+1})}<\epsilon$ and $\nabla \omega \in [\lambda C,\,(\lambda-1)C]_\epsilon$ on $\R^{n+1};$
\item[(e)] there exist open  sets $G', G''\subset\subset G$ such that
\[
\begin{cases}
\mbox{$\nabla\omega(x,t)=\lambda C$ in $G',$} &\mbox{ $|G'| \ge (1-\epsilon)(1-\lambda)|G|,$}
\\
\mbox{$\nabla\omega(x,t)=(\lambda-1)C$ in $G'',$} &\mbox{ $|G''| \ge (1-\epsilon)\lambda |G|.$}
\end{cases}
\]
\end{itemize}
\end{lem}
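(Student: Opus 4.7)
I would build $\omega$ by an oscillatory plane-wave ansatz in the space-time direction $\xi:=(\alpha,s)\in\R^{n+1}$ (nonzero since $\alpha\ne 0$), modulated by a smooth cutoff, with both $\varphi$ and $\psi^i$ expressed as pure derivatives of compactly supported scalars so that (b) and (c) hold \emph{exactly}.

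\emph{Building block and ansatz.} Let $\chi_0$ be the $1$-periodic step function equal to $\lambda$ on $[0,1-\lambda]$ and to $\lambda-1$ on $[1-\lambda,1]$, so $\int_0^1\chi_0=0$. Mollify on a scale $\delta\ll 1$ to a smooth $1$-periodic $\tilde\chi\in[\lambda-1,\lambda]$ that still equals $\lambda$ or $\lambda-1$ outside a transition set of relative measure $O(\delta)$. Let $\Phi,\Psi$ be periodic primitives with $\Phi'=\tilde\chi$, $\Psi'=\Phi$, and rescale $\Phi_N(\tau):=N^{-1}\Phi(N\tau)$, $\Psi_N(\tau):=N^{-2}\Psi(N\tau)$; thus $\Phi_N'=\tilde\chi(N\cdot)$ keeps the correct slope densities while $\|\Phi_N\|_\infty\lesssim 1/N$ and $\|\Psi_N\|_\infty\lesssim 1/N^2$. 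Pick an index $\ell$ with $\alpha_\ell\ne 0$ and a cutoff $\eta\in C^\infty_c(G;[0,1])$ equal to $1$ on an open set $G_0\subset\subset G$ with $|G\setminus G_0|<(\epsilon/3)|G|$. For each $i$ let $M^i:=\beta^i\otimes\alpha-\alpha\otimes\beta^i$, antisymmetric in $\R^{n\times n}$. Set
\[
\varphi(x,t):=\frac{1}{\alpha_\ell}\,\partial_{x_\ell}\!\bigl(p\,\Psi_N(\xi\!\cdot\!(x,t))\,\eta(x,t)\bigr),
\]
\[
\psi^i_k(x,t):=\frac{1}{|\alpha|^2}\sum_{l=1}^n\partial_{x_l}\!\bigl(M^i_{kl}\,\Psi_N(\xi\!\cdot\!(x,t))\,\eta(x,t)\bigr),
\]
with $\psi^i\equiv 0$ in the degenerate case $n=1$ (admissibility then forces $\beta^i=0$). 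Then (a) holds because $\supp\omega\subset\supp\eta\subset\subset G$; (c) holds because $\varphi$ is an $x_\ell$-derivative of a function with compact $x$-support; and (b) holds because $\dv_x\psi^i=|\alpha|^{-2}\sum_{k,l}M^i_{kl}\,\partial_{x_k}\partial_{x_l}(\Psi_N\eta)$ pairs the antisymmetric constant tensor $M^i$ against the symmetric Hessian.

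\emph{Evaluating $\nabla\omega$.} Admissibility $\beta^i\cdot\alpha=0$ gives $(M^i\alpha)_k=|\alpha|^2\beta^i_k$. On $G_0=\{\eta\equiv 1\}$ all $\nabla\eta$-terms drop, leaving
\[
\varphi=p\,\Phi_N(\xi\!\cdot\!(x,t)),\quad\psi^i_k=\beta^i_k\,\Phi_N(\xi\!\cdot\!(x,t)),\quad\nabla\omega=\Phi_N'(\xi\!\cdot\!(x,t))\,C.
\]
Let $G':=G_0\cap\{\Phi_N'\circ(\xi\!\cdot\!)=\lambda\}$ and $G'':=G_0\cap\{\Phi_N'\circ(\xi\!\cdot\!)=\lambda-1\}$. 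A Fubini/slicing argument in the direction $\xi$ (sweeping $G_0$ by the affine hyperplanes $\{\xi\cdot(x,t)=\tau\}$, combined with weak-$*$ convergence $\chi_{\{\tilde\chi(N\cdot)=\lambda\}}\overset{*}{\rightharpoonup}1-\lambda-O(\delta)$) produces $|G'|\geq(1-\epsilon)(1-\lambda)|G|$ and $|G''|\geq(1-\epsilon)\lambda|G|$ once $\delta$ is small and $N$ is large, giving (e). Off $G_0$ the full expression reads $\nabla\omega=\eta\,\Phi_N'(\xi\!\cdot\!(x,t))\,C+R_N$, where the corrector $R_N$ is a bounded combination of $\nabla\eta,\nabla^2\eta$ against $\Phi_N,\Psi_N$, hence $\|R_N\|_\infty=O(1/N)$. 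Since $\eta\Phi_N'\in[\lambda-1,\lambda]$ (using $\lambda\in(0,1)$), the main term lies on $[(\lambda-1)C,\lambda C]$, and large $N$ yields $\|\omega\|_{L^\infty}<\epsilon$ and $\nabla\omega\in[\lambda C,(\lambda-1)C]_\epsilon$, giving (d).

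\emph{Main obstacle.} The essential difficulty is meeting the three \emph{exact} constraints — compact support in $G$, $\dv_x\psi^i=0$, and zero spatial mean of $\varphi$ — without sacrificing the plane-wave structure required by (e). Writing $\varphi$ as a single $\partial_{x_\ell}$-derivative makes the zero-mean condition automatic; writing $\psi^i$ as the $x$-divergence of the antisymmetric tensor $M^i\Psi_N\eta$ makes the divergence-free condition automatic. The latter construction is precisely what is enabled by $\beta^i\cdot\alpha=0$, through the identity $(M^i\alpha)_k=|\alpha|^2\beta^i_k$; without this the main term of $\psi^i$ would not align with $\beta^i$ and the plane-wave structure would break. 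Given these structural identities, the rest is routine rescaling and a one-dimensional averaging argument.
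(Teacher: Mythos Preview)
Your construction is correct and essentially the same as the paper's. The paper likewise builds $\omega=\mathcal P[h]=[(\alpha\cdot Dh)p,\,(\beta^i\otimes\alpha-\alpha\otimes\beta^i)Dh]$ from a single scalar potential $h=\zeta f(\alpha\cdot x+st)$ with $\zeta\in C_c^\infty(G)$ and a highly oscillatory $f$ whose second derivative takes mostly the values $\lambda/|\alpha|^2$ and $(\lambda-1)/|\alpha|^2$; the antisymmetric tensor $M^i=\beta^i\otimes\alpha-\alpha\otimes\beta^i$ is exactly your mechanism for (b), and writing $\varphi$ as a spatial derivative of a compactly supported function is exactly the mechanism for (c). The only cosmetic difference is that the paper uses the full directional derivative $\alpha\cdot D$ for $\varphi$ (so $\varphi$ and $\psi^i$ come from the \emph{same} potential $h$), whereas you use a single $\partial_{x_\ell}$; both give the identical leading term $p\,\Phi_N$ on $\{\eta=1\}$ and an $O(1/N)$ corrector off it, so there is no substantive distinction.
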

\begin{proof} The result with  $m=1$  has been proved in \cite[Lemma 4.5]{KY1}. For general $m$, following the same idea, given $h\in C^\infty(\R^{n+1}),$ let
\[
\mathcal P[h]=[ u ,( v ^i)]=[(\alpha\cdot Dh)p,\, (\beta^i\otimes \alpha-\alpha\otimes \beta^i)Dh].
\]
Then $\dv  v ^i=0$. The proof can be achieved by setting $\omega=[\varphi, (\psi^i)]=\mathcal P[h]$, where $h=\zeta(x,t)f(\alpha\cdot x+st)$ for some suitably chosen $\zeta\in C_c^\infty(\R^{n+1})$ with $\supp\zeta\subset\subset G$ and highly oscillatory $f\in C^\infty(\R)$ with $|\alpha|^2 f''$ taking mostly the values of $0, \lambda, \lambda-1$ and with $f$ and $f'$ being sufficiently small.
\end{proof}

Thanks to the specific  structure  of   admissible  $T_N$-configurations, we  establish the following  result  directly by iterating Lemma \ref{lem1} in finite steps, without using the rank-one convex Young measures as in \cite{MSv2}.

\begin{thm}\label{thm1} Assume that $(X_1, \dots,X_N)$ is an admissible  $T_N$-configuration in   $\M^{(m+nm)\times (n+1)}.$  Let $Y\in  T (X_1,\dots,X_N)$ have the specific convex combination form $Y=\sum_{i=1}^N \nu_i X_i$, $G$ be  bounded  open  in $\R^{n+1},$  and $0<\epsilon<1.$  Then there exists  $\omega=[\varphi, (\psi^i)]\in C^\infty_c(\R^{n+1};\R^m\times (\R^n)^m)$ such that
\begin{itemize}
\item[(a)]  $\supp \varphi \subset\subset G,\;\supp\psi^i \subset\subset G \, (i=1,\dots,m);$
\item[(b)] $\dv\psi^i=0$ in $\R^{n+1}$ for all $i=1,\dots,m;$
\item[(c)] $\int_{\R^n} \varphi(x,t)\,dx=0$ for all $t\in\R;$
\item[(d)] $\|\omega\|_{L^\infty(\R^{n+1})}<\epsilon$ and  $Y+\nabla \omega \in [\bar{ T }(X_1,\dots,X_N)]_\epsilon$ on $\R^{n+1};$  
\item[(e)] there exist  disjoint open  sets $V_i\subset\subset G$ such that 
$
|V_i|\ge (1-\epsilon) \nu_i |G|$ and $ Y+\nabla\omega(x,t) =X_i$ in $V_i;$ in particular, if $V=\cup_{i=1}^N V_i,$ then $|V|\ge (1-\epsilon)|G|$ and $Y+\nabla\omega(x,t)\in\{X_1,\dots,X_N\}$ in $V.$
\end{itemize}
\end{thm}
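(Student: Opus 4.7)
The plan is to iterate Lemma \ref{lem1} around the cyclic structure of the $T_N$-configuration. Since $Y\in T(X_1,\dots,X_N)=\cup_{j=1}^N(X_j,P_j)$, we can write $Y=(1-\mu)X_{j_0}+\mu P_{j_0}$ for some $j_0\in\{1,\dots,N\}$ and $\mu\in(0,1)$. Each difference $X_j-P_j=\kappa_j C_j$ is an admissible rank-one matrix (admissibility is a block-form condition preserved under positive scaling), and the cyclic identity $P_j=\frac{1}{\kappa_{j-1}}X_{j-1}+(1-\frac{1}{\kappa_{j-1}})P_{j-1}$ writes each $P_j$ as a convex combination of $X_{j-1}$ and $P_{j-1}$ along the admissible direction $X_{j-1}-P_{j-1}$. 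These observations let us split any region of constant value $P_j$ into an $X_{j-1}$-part and a $P_{j-1}$-part by a single application of Lemma \ref{lem1}.

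The construction proceeds inductively. In step $1$ we apply Lemma \ref{lem1} on $G$ with direction $X_{j_0}-P_{j_0}$ and parameter $\mu$, producing $\omega^{(1)}\in C_c^\infty(G;\R^m\times(\R^n)^m)$ of small $L^\infty$-norm together with disjoint open sets $V^{(1)},G_1\subset\subset G$ on which $Y+\nabla\omega^{(1)}$ equals $X_{j_0}$ and $P_{j_0}$ respectively, with $|V^{(1)}|\ge(1-\delta_1)(1-\mu)|G|$ and $|G_1|\ge(1-\delta_1)\mu|G|$. In step $k\ge 2$, suppose $Y+\sum_{\ell<k}\nabla\omega^{(\ell)}$ is identically $P_j$ on the bad set $G_{k-1}$ (with $j=j_0$ when $k=2$); apply Lemma \ref{lem1} on $G_{k-1}$ with admissible direction $X_{j-1}-P_{j-1}$ and parameter $1-\frac{1}{\kappa_{j-1}}$, obtaining $\omega^{(k)}$ supported in $G_{k-1}$, a new good set $V^{(k)}$ on which the updated sum equals $X_{j-1}$, and a smaller bad set $G_k\subset\subset G_{k-1}$ on which it equals $P_{j-1}$. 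The index thus decreases by $1$ cyclically at each step.

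At each step $|G_k|\le(1-(1-\delta_k)/\kappa_{\max})|G_{k-1}|$ (with $\kappa_{\max}=\max_j\kappa_j$), so the bad set shrinks geometrically. Choosing $K$ large enough that the cumulative shrinkage forces $|G_K|<\epsilon|G|/2$ and setting $\delta_k=2^{-k}\epsilon$, define $\omega:=\sum_{k=1}^K\omega^{(k)}$. Because the supports are nested ($\supp\omega^{(k)}\subset G_{k-1}\subset G$), conditions (a)--(c) transfer directly from Lemma \ref{lem1} to the sum (compact support in $G$, divergence-freeness of each $\psi^i$-component, and vanishing $x$-mean are all preserved by summation), while $\|\omega\|_\infty\le\sum_k\delta_k<\epsilon$ gives the $L^\infty$ bound in (d); the containment $Y+\nabla\omega\in[\bar T(X_1,\dots,X_N)]_\epsilon$ follows because at every step the running sum lies $O(\delta_k)$-close to the segment $[X_j,P_j]\subset\bar T(X_1,\dots,X_N)$ currently being decomposed. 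Finally, defining $V_i$ as the union over $k$ of good pieces produced at step $k$ with value $X_i$, the bound $|V_i|\ge(1-\epsilon)\nu_i|G|$ follows by summing the geometric series of $X_i$-region sizes generated in each cycle of $N$ steps, matching the recursion underlying (\ref{P-j})--(\ref{nu_i^1}).

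The main obstacle is the bookkeeping: checking that the geometric series of $X_i$-region sizes produced by the cyclic iteration sums precisely to $\nu_i|G|$, which amounts to identifying the iteration's recursion with the explicit formulas (\ref{P-j})--(\ref{nu_i^1}) for the $\nu_i^j$. Absorbing the cumulative transition losses---where Lemma \ref{lem1}(e) only guarantees $\nabla\omega^{(k)}\in[\lambda_k C,(\lambda_k-1)C]_{\delta_k}$ on a set of measure $\le\delta_k|G_{k-1}|$ per step---into the prescribed $\epsilon$ bound is a separate delicate but routine computation once $K$ and the $\delta_k$ are chosen with foresight.
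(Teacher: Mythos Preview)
Your proposal is correct and takes essentially the same approach as the paper: both iterate Lemma~\ref{lem1} cyclically around the $T_N$-configuration, at each step splitting off an $X_j$-region and recursing on the residual $P_{j-1}$-region, with the volume identity $|V_i|\approx\nu_i|G|$ emerging from the recursion behind (\ref{P-j})--(\ref{nu_i^1}). The only cosmetic difference is that the paper uses a single parameter $\epsilon'$ at every step and fixes the number $k$ of full $N$-cycles before choosing $\epsilon'$ small enough that $(k+1)(N+1)\epsilon'<\epsilon$ and $(1-\epsilon')^{(k+2)N}\ge(1-\epsilon)^{1/2}$, whereas you take geometrically decaying $\delta_k$; either bookkeeping works (your constants may need a harmless adjustment such as $\delta_k=2^{-k}\epsilon/3$ so that the total transition loss plus the terminal bad set together fit under $\epsilon|G|$).
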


\begin{proof} Let $(X_1, \dots,X_N)$ be determined by $C_j, P_j$ and $\kappa_j.$ 
Without loss of generality, we assume $Y= (1-\lambda) X_1+\lambda P_1$ for some $0<\lambda<1.$ 
Let $C=X_1-P_1=\kappa_1C_1.$  Since $C$ is admissible, applying Lemma \ref{lem1}, we obtain  $\omega_0 \in C_c^\infty(\R^{n+1})$ with $\supp \omega_0 \subset\subset G=G_0$ satisfying (a)-(e) of the lemma, with a number $\epsilon'\in (0,\epsilon)$ to be determined later. Hence $Y+\nabla \omega_0 =Y+\lambda C=X_1$ in $G_0'$ and $Y+\nabla \omega_0 =Y+(\lambda-1)C=P_1$ in $G_0'',$ where
\[
|G_0'|\ge (1-\epsilon')(1-\lambda)|G|, \;\; |G_0''|\ge (1-\epsilon')\lambda |G|.
\]
We  next modify the function $\omega_0$ on the open set $G_1\equiv G_0'',$ where $Y+\nabla \omega_0=P_1$.
For simplicity, set $\lambda_j= 1-\frac{1}{\kappa_j}\in (0,1)$ for $j=1,\dots,N.$  Since $P_1=P_{N+1}=(1-\lambda_N) X_N+ \lambda_N P_N$, using $C=X_N-P_N=\kappa_NC_N$ and applying Lemma \ref{lem1} to the open set $G_1$ and the matrix $P_1$,  we obtain $\omega_1 \in C_c^\infty(\R^{n+1})$ with $\supp \omega_1\subset\subset G_1$ satisfying (a)-(e) of the lemma with open set $G_1$ and number $\epsilon'$. Hence $P_1+\nabla \omega_1 =X_N$ in $G_1'\subset\subset G_1$ and $P_1+\nabla \omega_1  =P_N$ in $G_1''\subset\subset G_1,$ where
\[
|G_1'|\ge (1-\epsilon')(1-{\lambda_N})|G_1|,\quad  |G_1''|\ge (1-\epsilon'){\lambda_N} |G_1|.
\]
Let $G_2=G_1''.$ Using $P_N={(1-\lambda_{N-1})}X_{N-1}+ \lambda_{N-1} P_{N-1}$, we repeat the above procedure to obtain $\omega_2 \in C_c^\infty(\R^{n+1})$ with $\supp \omega_2\subset\subset G_2$ satisfying (a)-(e) of the lemma with open set $G_2$ and number $\epsilon'.$  
Hence $P_N+\nabla \omega_2 =X_{N-1}$ in $G_2'\subset\subset G_2$ and $P_N+\nabla \omega_2  =P_{N-1}$ in $G_2''\subset\subset G_2,$ where
\[
|G_2'|\ge (1-\epsilon')(1-{\lambda_{N-1}})|G_2| \ge (1-\epsilon')^2 (1-{\lambda_{N-1}})  {\lambda_N}   |G_1|, 
\]
\[
|G_2''|\ge (1-\epsilon'){\lambda_{N-1}} |G_2|\ge (1-\epsilon')^2 {\lambda_{N-1}} {\lambda_N}   |G_1|.
\]
Continue this procedure until  we reach the identity $P_2={(1-\lambda_{1})}X_{1}+ {\lambda_{1}} P_{1}$ to  obtain $\omega_N \in C_c^\infty(\R^{n+1})$ with $\supp \omega_N\subset\subset G_N=G_{N-1}''$ satisfying (a)-(e) of the lemma with open set $G_N$ and number $\epsilon'.$ Hence $P_2+\nabla \omega_N =X_{1}$ in $G_N'\subset\subset G_N$ and $P_2+\nabla \omega_N  =P_{1}$ in $G_N''\subset\subset G_N,$ with
\[
|G_N'|\ge  (1-\epsilon')^{N}(1-{\lambda_1}) {\lambda_{2}}   \cdots   {\lambda_N}  |G_1|, \quad 
 |G_N''|\ge  (1-\epsilon')^{N} {\lambda_{1}} \cdots   {\lambda_N} |G_1|.
\]
Let $\mu={\lambda_{1}} \cdots   {\lambda_N}$ and $\tilde \omega_1=\omega_1+\cdots +\omega_N.$ Then $\tilde\omega\in C_c^\infty(\R^{n+1})$,   $\supp\tilde \omega_1 \subset\subset G_1$, $Y+\nabla\tilde \omega_1=P_1$ on $G_N'',$ and $Y+\nabla\tilde\omega_1=X_{j}$ on open set $V^1_{j}=G_{N+1-j}'\subset\subset G_1$ for $j=1,\dots,N,$ where
\[
|V^1_{j}|=|G_{N+1-j}'| \ge (1-\epsilon')^{N+1-j} (1-{\lambda_{j}}){\lambda_{j+1}} \cdots  {\lambda_N}  |G_1|.
\]
By (\ref{nu_i^1}),  it follows that $(1-{\lambda_{j}}){\lambda_{j+1}} \cdots  {\lambda_N}  =\nu_j^1 (1-\mu)$ and thus
\begin{equation}\label{V_1j-set}
|V^1_{j}|\ge (1-\epsilon')^{N} \nu_j^1 (1-\mu)|G_1|.
\end{equation}
We  now perform  the previous modification procedures  for the function $\tilde\omega_1$ on the open set $G_{11}\equiv G_N''$ (where  $Y+\nabla\tilde\omega_1=P_1$). In this way, we obtain  open sets $V^2_{j}\subset\subset G_{11}$ for $j=1,\dots,N$  and a function $\tilde \omega_{2}\in C_c^\infty(\R^{n+1})$ with $\supp \tilde\omega_{2}\subset\subset G_{11}$ such that $Y+\nabla\tilde\omega_{2}=X_j$ on $V^2_{j}$ and
\[
|V^2_{j}|\ge (1-\epsilon')^N \nu_j^1(1-\mu)  |G_{11}|\ge (1-\epsilon')^{2N} \nu_j^1 (1-\mu)\mu|G_1|.
\]
Moreover, $Y+\nabla\tilde\omega_{2}=P_1$ on an open set $G_{12}'' \subset\subset G_{11}$ with 
\[
|G_{12}''|\ge  (1-\epsilon')^N \mu |G_{11}|\ge (1-\epsilon')^{2N} \mu^2|G_1|.
\]
We iterate  this procedure  on the open set $G_{21}=G_{12}''$ and,  after $k$ iterations, we obtain  open sets $V^{k+1}_j \subset\subset G_{k1}=G_{(k-1)2}''$ and a function $\tilde \omega_{k+1}\in C_c^\infty(\R^{n+1})$ with $\supp \tilde\omega_{k+1}\subset\subset G_{k1}$ such that $Y+\nabla\tilde\omega_{k+1}=X_j$ on $V^{k+1}_j$ and
\[
|V^{k+1}_j|\ge   (1-\epsilon')^{(k+1)N} \nu_j^1 (1-\mu)\mu^ {k}|G_1|.
\]
In general, $|V^{i+1}_j|\ge   (1-\epsilon')^{(i+1)N} \nu_j^1 (1-\mu)\mu^ {i}|G_1| \ge   (1-\epsilon')^{(k+1)N} \nu_j^1 (1-\mu)\mu^ {i}|G_1|$ for $i=0,1,\dots,k.$ Let  
\[
\tilde\omega=\sum_{j=1}^{k+1} \tilde\omega_{j},\quad \tilde V_j =\bigcup_{i=0}^{k} V_j^{i+1}.
\]
  Then, $\tilde\omega\in C_c^\infty(\R^{n+1})$ with $\supp \tilde\omega \subset\subset G_1$,  $Y+\nabla\tilde\omega=X_j$ in $\tilde V_j \subset\subset G_1$, and 
\[
|\tilde V_j|\ge (1-\epsilon')^{(k+1)N} \nu_j^1 (1-\mu) (\sum_{i=0}^{k} \mu^{i}) |G_1|=(1-\epsilon')^{(k+1)N} \nu_j^1 (1-\mu^{k+1})|G_1|.
\]
Finally, we first select  $k\in\mathbb N$ sufficiently large so   that $(1-\mu^{k+1})\ge (1-\epsilon)^{1/2}$ and then select $0<\epsilon'<\epsilon$  sufficiently small so that
\[
(k+1)(N+1)\epsilon'<\epsilon,\quad (1-\epsilon')^{(k+2)N} \ge (1-\epsilon)^{1/2}.
\]
Let $\omega=\omega_0+\tilde\omega\in C^\infty(\R^{n+1}).$ Then,  $\supp \omega\subset\subset G$,   
$Y+\nabla\omega=X_j$ in $\tilde V_j$ for $j=2,\dots,N$ and $Y+\nabla\omega=X_1$ on $V_1=\tilde V_1\cup G_0'$. 
For $j=2,\dots,N$,
\[
|\tilde V_j|\ge  (1-\epsilon')^{(k+1)N} \nu_j^1 (1-\mu^{k+1})|G_1|\ge  (1-\epsilon) \nu_j^1  \lambda |G|=
 (1-\epsilon)\nu_j|G|.
 \]
Also, noting that $\nu_1=(1-\lambda)+\lambda \nu_1^1$, one has
\[
|V_1|=|G_0'|+|\tilde V_1| \ge  (1-\epsilon')(1-\lambda)|G| + (1-\epsilon) \nu_1^1  \lambda |G| \ge (1-\epsilon)\nu_1 |G|.
\]
 Furthermore, $\|\omega\|_{L^\infty}<(k+1)(N+1)\epsilon' <\epsilon$ and 
\[
\dist(Y+\nabla\omega; \bar{ T }(X_1,\dots,X_N)) <(k+1)(N+1)\epsilon'<\epsilon.
\]
The proof is thus  finished.
\end{proof}

\section{ $\tau_N$-configurations and  the Condition (OC)} \label{section-tau_N}

 Let $\mathbb P$ be the ``diagonal" projection  in $\M^{(m+nm)\times (n+1)}$ defined by
\begin{equation}\label{P}
\mathbb P\left (\begin{bmatrix} A & a\\ (B^i)& (b^i)\end{bmatrix}\right)=[A, (b^i)]\in \M^{m\times n}\times (\R^n)^m.
\end{equation}
Assume $\sigma\colon\M^{m\times n}\to \M^{m\times n}\cong (\R^n)^m$  and  $\mathcal K$ is defined  by (\ref{set-K0}). Then  
  \begin{equation}\label{g-K}
\mathbb K=\mathbb P(\mathcal K)=\{[A,(\sigma^i(A))]:A\in \M^{m\times n}\} 
\end{equation}
is exactly  the graph of $\sigma$ in $\M^{m\times n}\times (\R^n)^m.$  
The following  result is useful.

\begin{lem}\label{lem-X-s}  Let $(X_1,\dots,X_N)$ be an admissible $T_N$-configuration  given by $P, C_j$ and $\kappa_j>1$, where
\[
P=\begin{bmatrix} \tilde A& \tilde a\\ (\tilde B^i)& (\tilde b^i)\end{bmatrix},\quad C_j= \begin{bmatrix} p_j\otimes \alpha_j & s_jp_j\\ (\beta_j^i\otimes \alpha_j)& (s_j\beta_j^i)\end{bmatrix}.
\]
Let 
\begin{equation}\label{C-s}
\tilde P=\begin{bmatrix} \tilde A& 0\\ (0)& (\tilde b^i)\end{bmatrix},\quad C^s_j= \begin{bmatrix} p_j\otimes \alpha_j & ss_jp_j\\ \frac{1}{s}(\beta_j^i\otimes \alpha_j)& (s_j\beta_j^i)\end{bmatrix} \;\; (s\ne 0)
\end{equation}
and let $\tilde P_1^s=\tilde P,$ and $\tilde P^s_j=\tilde P+C_1^s+\cdots +C_{j-1}^s$ for $j=2,\dots,N.$ Then $\{\tilde X_j^s=\tilde P_j^s +\kappa_j C^s_j\}_{j=1}^N$ forms an admissible   $T_N$-configuration  given by $\tilde P, C^s_j$ and $\kappa_j$, with
$
\mathbb P( T (\tilde X^s_1,\dots,\tilde X^s_N))=\mathbb P( T (X_1,\dots,X_N)).
$
Furthermore,  if $X_j\in \mathcal K$ then $\tilde X^s_j\in \mathcal K.$ 
\end{lem}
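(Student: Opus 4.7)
The plan is to establish all three assertions by a direct block-wise computation. The guiding observation is that the rescaling $C_j \mapsto C_j^s$ and $P \mapsto \tilde P$ modifies only the \emph{off-diagonal} blocks, while the top-left block $p_j\otimes \alpha_j$ and the bottom-right block $s_j\beta_j^i$ of $C_j$ (and similarly the corresponding blocks of $P$) are left untouched. Since the projection $\mathbb{P}$ extracts exactly those two blocks, once the $T_N$-structure of $\{\tilde X_j^s\}$ is in hand, the identity $\mathbb{P}(T(\tilde X^s_1,\dots,\tilde X^s_N)) = \mathbb{P}(T(X_1,\dots,X_N))$ and the $\mathcal K$-membership should follow almost automatically.

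First I would verify that each $C_j^s$ is rank one and admissible. For rank one, I would write down the explicit factorization
\[
C_j^s = \bigl(p_j,\,\tfrac{1}{s}\beta_j^1,\dots,\tfrac{1}{s}\beta_j^m\bigr)^T \otimes (\alpha_j,\,s s_j),
\]
which is immediate from the given block form. Admissibility in the sense of Definition~\ref{add-s-t-TN} then follows since $\alpha_j \ne 0$ and $(\tfrac{1}{s}\beta_j^i)\cdot \alpha_j = 0$ are both inherited from the admissibility of $C_j$. The relation $\sum_j C_j^s = 0$ follows by distributing the scalars $s$ and $1/s$ block-wise through $\sum_j C_j = 0$. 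Together with the unchanged values $\kappa_j > 1$, this confirms that $(\tilde X_1^s,\dots,\tilde X_N^s)$ is an admissible $T_N$-configuration determined by $\tilde P$, $C_j^s$, $\kappa_j$.

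For the equality of $\mathbb{P}$-images, I would compute $\mathbb{P}(\tilde P_j^s)$ and $\mathbb{P}(\tilde X_j^s)$ directly: the top-left blocks of $\tilde P_j^s$ and $P_j$ both equal $\tilde A + \sum_{l<j} p_l \otimes \alpha_l$, and their bottom-right blocks both equal $(\tilde b^i + \sum_{l<j} s_l \beta_l^i)$, so $\mathbb{P}(\tilde P_j^s) = \mathbb{P}(P_j)$; adding $\kappa_j C_j^s$ versus $\kappa_j C_j$ contributes identical top-left and bottom-right pieces, giving $\mathbb{P}(\tilde X_j^s) = \mathbb{P}(X_j)$. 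By linearity of $\mathbb{P}$ this equality extends to every segment $(\tilde X_j^s, \tilde P_j^s)$. Finally, to check $\tilde X_j^s \in \mathcal K$ when $X_j \in \mathcal K$, the coincidence of top-left and bottom-right blocks already places $A_j$ and $(\sigma^i(A_j))$ correctly, so only the trace condition on the bottom-left block needs verification; this reduces by admissibility to $\tfrac{1}{s}\bigl(\sum_{l<j}\beta_l^i\cdot \alpha_l + \kappa_j\,\beta_j^i \cdot \alpha_j\bigr) = 0$, which is automatic.

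I do not foresee a serious obstacle; the argument is essentially bookkeeping. The only step that warrants a moment of care is that the asymmetric scaling (multiplying the top-right block by $s$ and the bottom-left by $1/s$) still preserves rank one — but the explicit factorization written above makes this transparent, and indeed explains \emph{why} the particular combination of scalings in the definition of $C_j^s$ is the right one.
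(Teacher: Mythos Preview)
Your proposal is correct and follows essentially the same route as the paper's proof: verify that each $C_j^s$ is an admissible rank-one matrix with $\sum_j C_j^s=0$, then use $\mathbb P(\tilde P)=\mathbb P(P)$ and $\mathbb P(C_j^s)=\mathbb P(C_j)$ to conclude the equality of $\mathbb P$-images and the $\mathcal K$-membership. You supply more detail than the paper (the explicit rank-one factorization and the explicit trace computation for the bottom-left block), but the underlying argument is identical.
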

\begin{proof} Clearly $C_j^s$ is an admissible  rank-one matrix. From $\sum_{j=1}^N C_j=0$, we have $\sum_{j=1}^N C_j^s=0$, and thus the $N$-tuple $(\tilde X^s_1,\dots,\tilde X^s_N)$ forms an admissible $T_N$-configuration given by $\tilde P, C^s_j$ and $\kappa_j.$  Since $\mathbb P(\tilde P)=\mathbb P(P)$ and $\mathbb P(C_j^s)=\mathbb P(C_j),$ one has $\mathbb P(\tilde X_j^s)=\mathbb P(X_j)$ and $\mathbb P(\tilde  P_j^s)=\mathbb P(P_j),$ which proves $\mathbb P((X_j,P_j])=\mathbb P((\tilde X_j^s,\tilde P_j^s])$ and thus \[
\mathbb P( T (\tilde X^s_1,\dots,\tilde X^s_N))=\mathbb P( T (X_1,\dots,X_N)).\]
Finally, if $X_j\in \mathcal K$ then  $\mathbb P(\tilde X_j^s)=\mathbb P(X_j)\in \mathbb K;$ thus $\tilde X^s_j\in \mathcal K.$ 
\end{proof}

 \begin{defn}[$\tau_N$-configuration]\label{def-tauN} We say that an  $N$-tuple $(\xi_1,\xi_2,\dots,\xi_N)$  with  $\xi_j\in \M^{m\times n}\times (\R^n)^m$  is a {\em $\tau_N$-configuration} provided that  there exist  $\rho, \, \gamma_1, \dots,\gamma_N$ in $\M^{m\times n}\times (\R^n)^m,$ and  $\kappa_1>1, \dots,\kappa_N>1$ such that  
\begin{equation}\label{tauN}
\begin{cases}
\xi_1  =   \rho+\kappa_1\gamma_1,\\
\xi_2  =  \rho+\gamma_1+\kappa_2\gamma_2,\\
 \qquad  \vdots   \\
\xi_N   =  \rho+\gamma_1+\cdots +\gamma_{N-1}+\kappa_N\gamma_N,
\end{cases}
\end{equation}
where $\gamma_j =[p_j\otimes \alpha_j, (s_j\beta_j^i)],$ with $s_j\in \R, \alpha_j\ne 0,\beta^i_j\in \R^n$ and $p_j\in \R^m$ satisfying
\begin{eqnarray}
&\sum_{j=1}^N s_jp_j=0,\quad  \sum_{j=1}^N s_j\beta_j^i=0  \;\;\; (i=1,\dots,m),& \label{rk-1}\\
& \sum_{j=1}^N p_j\otimes \alpha_j=0,\quad  \sum_{j=1}^N \beta_j^i\otimes \alpha_j=0 \;\;\; (i=1,\dots,m),& \label{rk-2}\\ 
&\beta^i_j\cdot \alpha_j=0  \quad  (j=1,\dots,N; \; i=1,\dots,m).& \label{rk-3}
\end{eqnarray}
In this case,  define  $\rho_1=\rho,\; \rho_j=\rho+\gamma_1+\cdots +\gamma_{j-1}$  for $j=2,\dots,N$, and
\begin{equation}\label{tauN1}
\mathbf {\tau}(\xi_1,\dots,\xi_N)=\cup_{j=1}^N (\xi_j,\rho_j).
\end{equation} 
\end{defn}

\begin{remk} \label{remk-30}  
 Let  $\mathcal M_N$ be the set of all $\tau_N$-configurations.  As a subset of  $[\M^{m\times n}\times (\R^n)^m]^N$, due to possible degeneracy in conditions (\ref{rk-1})--(\ref{rk-3}), $\mathcal M_N$ may not be a sub-manifold even locally near a given $\tau_N$-configuration.  
We  instead consider  certain special $\tau_N$-configurations of  $\mathcal M_N.$ 
 For example, if $s_j=\alpha_j\cdot \delta$ for some fixed $\delta\in\R^n,$ then condition (\ref{rk-1}) follows automatically from (\ref{rk-2}); the choice of $s_j=\alpha_j\cdot\delta$ represents a time scaling that is proportional to the spatial scaling. See Subsection \ref{dim=2}   for more details.
\end{remk}

 We are now in a position to define our Condition (OC).

 \begin{defn}[{Condition (OC)}]  \label{cond-C} A function  $\sigma\colon \M^{m\times n}\to  \M^{m\times n}$ is said to satisfy  Condition (OC) provided that there exists a nonempty  bounded {\em open set}  $\Sigma \subset \M^{m\times n}\times (\R^n)^m$ satisfying the following property:
 \begin{equation}\label{C}
\begin{cases} \forall\; [A,(b^i)]\in \Sigma \;\; \mbox{$\exists \; N\ge 2$ and $\tau_N$-configuration $(\xi_1,\dots,\xi_N)$ } 
\\
\mbox{such that $\xi_j\in \mathbb K$ for all $j$ and  $[A,(b^i)]\in \tau(\xi_1,\dots,\xi_N) \subseteq \Sigma.$}\end{cases}
\end{equation}
 \end{defn}

\subsection{The case of single unknown function} \label{dim-m=1} Assume $m=1.$ In  this  case, any $\tau_2$-configuration $(\xi_1,\xi_2)$ in $\mathbb K$ is given by $\xi_j=[p_j,\sigma(p_j)]$ for $j=1,2$ with $p_1,p_2\in\R^n$ satisfying $G(p_1,p_2)=0$, where 
\[
G(p,q)=(\sigma(p)-\sigma(q))\cdot (p-q)\quad\forall\,p,\,q\in\R^n.
\]
Let
 $\Delta=\{p\in\R^n\,|\,\mbox{$\sigma$ is $C^1$ near $p$}\}.$  For $p\in\Delta$, denote by $\sigma'(p)$ the $n\times n$ matrix $(\partial \sigma^i(p)/\partial p_j)$ ($i,j=1,\dots,n$). For $p,q\in \Delta$, let  
\[
\delta(p,q)= \det \begin{bmatrix} \sigma'(p)-\sigma'(q) & \sigma(p)-\sigma(q)-\sigma'(q)(p-q)\\
(\sigma(p)-\sigma(q)+\sigma'(p)^T(p-q))^T &0\end{bmatrix}.
\]

\begin{pro}\label{R-set}  Let $\Delta^+, \Delta^-$ be nonempty open subsets of $\Delta.$ Suppose $\delta(p,q)\ne 0$ for all $p\in \Delta^+$, $q\in \Delta^-$ and  $G(p_0,q_0)=0$ for some $p_0\in \Delta^+$, $q_0\in \Delta^-.$  Define 
\[
\Sigma =\left\{[p,\beta]\in \R^n\times \R^n \,\Big |\, \begin{aligned} &\mbox{$\exists\, \lambda\in (0,1),\, p_+\in \Delta^+,\, p_-\in \Delta^-,\, G(p_+,p_-)=0$} \\ & \mbox{with $[p,\beta]= \lambda [p_+,\sigma(p_+)] +(1- \lambda)[p_-,\sigma(p_-)]$}\end{aligned} \right \}.
\]
Then
$\Sigma $ is nonempty, open  and satisfies  condition $(\ref{C})$ with $N=2.$  
\end{pro}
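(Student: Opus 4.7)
The plan is to verify the three claims --- $\Sigma$ is nonempty, $\Sigma$ is open, and Condition (OC) holds on $\Sigma$ with $N=2$ --- in that order. Nonemptiness is immediate: the hypothesis $G(p_0,q_0)=0$ gives $\tfrac12[p_0,\sigma(p_0)]+\tfrac12[q_0,\sigma(q_0)]\in\Sigma$. Note also that $\Delta^+\cap\Delta^-=\emptyset$, for if $p\in\Delta^+\cap\Delta^-$ then every entry of the matrix defining $\delta(p,p)$ vanishes, contradicting $\delta\neq 0$; consequently any $p_+\in\Delta^+$ and $p_-\in\Delta^-$ satisfy $a:=p_+-p_-\neq 0$.

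For Condition (OC), fix $[p,\beta]=\lambda[p_+,\sigma(p_+)]+(1-\lambda)[p_-,\sigma(p_-)]\in\Sigma$ with $\lambda\in(0,1)$ and $G(p_+,p_-)=0$. Set $\xi_1=[p_+,\sigma(p_+)]$ and $\xi_2=[p_-,\sigma(p_-)]$ (both in $\mathbb K$), and $b=\sigma(p_+)-\sigma(p_-)$, so $\xi_1-\xi_2=[a,b]$ with $a\cdot b=G(p_+,p_-)=0$ --- precisely the orthogonality required in (\ref{rk-3}). Choosing any $\kappa_1,\kappa_2>1$ and letting $\kappa=\kappa_1+\kappa_2-1$, I would define $\gamma_1=[a/\kappa,b/\kappa]=[p_1\alpha_1,s_1\beta_1]$ via $p_1=s_1=1/\kappa$, $\alpha_1=a$, $\beta_1=b$; set $\gamma_2=-\gamma_1=[p_2\alpha_2,s_2\beta_2]$ via $p_2=p_1$, $\alpha_2=-\alpha_1$, $s_2=-s_1$, $\beta_2=\beta_1$; and $\rho=\xi_1-\kappa_1\gamma_1$. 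All of the identities (\ref{rk-1})--(\ref{rk-3}) are then straightforward to check, so $(\xi_1,\xi_2)$ is a genuine $\tau_2$-configuration. Because $\gamma_2=-\gamma_1$ and $\kappa_1,\kappa_2>1$, the union $(\xi_1,\rho_1)\cup(\xi_2,\rho_2)$ covers the entire open segment $\{\mu\xi_1+(1-\mu)\xi_2:\mu\in(0,1)\}$; each such point is a convex combination of the \emph{same} $\xi_1,\xi_2$ (with the same $p_\pm$ satisfying $G(p_+,p_-)=0$) and so lies in $\Sigma$ by definition. Hence $[p,\beta]\in\tau(\xi_1,\xi_2)\subseteq\Sigma$.

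The remaining assertion --- openness of $\Sigma$ --- is the substantive one, and it is where the hypothesis $\delta\neq 0$ enters. Given $[p^*,\beta^*]\in\Sigma$ with witness $(\lambda^*,p_+^*,p_-^*)$, consider the smooth map
\[
\Phi(\lambda,p_+,p_-)=\bigl(\lambda p_++(1-\lambda)p_-,\;\lambda\sigma(p_+)+(1-\lambda)\sigma(p_-),\;G(p_+,p_-)\bigr)
\]
from $\R\times\Delta^+\times\Delta^-$ into $\R^n\times\R^n\times\R$, so $\Phi(\lambda^*,p_+^*,p_-^*)=([p^*,\beta^*],0)$. If the $(2n+1)\times(2n+1)$ Jacobian $d\Phi$ at the witness point is invertible, the Implicit Function Theorem produces, for every $[p,\beta]$ near $[p^*,\beta^*]$, a nearby triple $(\lambda,p_+,p_-)$ with $\Phi(\lambda,p_+,p_-)=([p,\beta],0)$, exhibiting $[p,\beta]$ as a member of $\Sigma$. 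To compute $\det d\Phi$, I would arrange the Jacobian in blocks with columns $(\lambda\mid p_+\mid p_-)$ and row-blocks $(p\mid\beta\mid G)$, replace the $p_-$-column-block by its sum with the $p_+$-column-block to produce an $I$-block in the $p$-row, and use that $I$-block to zero out the $\beta$- and $G$-rows, reducing $\det d\Phi$ to an $(n+1)\times(n+1)$ residual determinant. The main obstacle is the final identification: showing that this residual is a nonzero multiple (likely involving a factor $[\lambda^*(1-\lambda^*)]^{n-1}$) of $\delta(p_+^*,p_-^*)$. The definition of $\delta$ appears custom-built for this reduction --- the top-left block $\sigma'(p_+)-\sigma'(p_-)$ is the derivative of the convex combination of $\sigma$, the two marginal vectors $\sigma(p_+)-\sigma(p_-)\mp\sigma'(p_\mp)(p_+-p_-)$ are the residuals of $\nabla G$ after elimination, and the base-point constraint $a\cdot b=0$ collapses the remaining cross-terms. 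A direct calculation in the case $n=1$ gives the clean identity $\det d\Phi=-\delta(p_+^*,p_-^*)$, independent of $\lambda^*$, strongly suggesting the same cancellations work in general dimension.
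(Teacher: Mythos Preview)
Your treatment of nonemptiness and Condition~(OC) is correct and in fact more explicit than the paper's (which simply says ``clearly''); the observation that $\Delta^+\cap\Delta^-=\emptyset$ and hence $\alpha_1=a\ne 0$ is a useful detail the paper leaves implicit.

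For openness your strategy is sound, but the proof is incomplete: you set up the full $(2n{+}1)\times(2n{+}1)$ Jacobian of $\Phi$, carry out one round of column elimination, and then stop, acknowledging that the identification of the residual $(n{+}1)\times(n{+}1)$ determinant with a nonzero multiple of $\delta$ is ``the main obstacle.'' The $n=1$ check is correct, but the general-$n$ computation along your path is genuinely messy --- after your reduction the bottom row still carries the term $a^T(A-B)a$ and a $\lambda$-dependent combination $(A^Ta+b)^T-\lambda a^T(A-B)$, neither of which appears in the $\delta$-matrix, so further row operations exploiting $a\cdot b=0$ and the first block-row are still needed.

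The paper closes this gap with two simplifications that you are missing. First, rather than invert the full map $\Phi$, it solves the \emph{linear} constraint $p=\lambda p_++(1-\lambda)p_-$ for $p_-=q(p_+,\lambda,p)$ at the outset, reducing to an $(n{+}1)\times(n{+}1)$ system $F(p_+,\lambda;p,\beta)=0$. Second --- and this is the key trick --- it replaces the constraint $G(p_+,p_-)=0$ by the equivalent condition $(\sigma(p_+)-\beta)\cdot(p_+-p)=0$. These agree (up to the nonzero factor $(1-\lambda)^2$) on the constraint set, but the rewritten form does not involve $\lambda$ at all, so $\partial F_2/\partial\lambda=0$: this is precisely the $0$ sitting in the bottom-right corner of the $\delta$-matrix. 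With that reformulation the Jacobian $\partial F/\partial(p_+,\lambda)$ is, by inspection, the $\delta$-matrix with the first $n$ columns scaled by $\lambda$ and the last row by $(1-\lambda)$, giving $\det=\lambda^n(1-\lambda)\,\delta(p_+,p_-)\ne 0$ directly. Adopting this reformulation would let you finish your argument cleanly in every dimension.
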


\begin{proof} Since $G(p_0,q_0)=0$, it follows easily that $\Sigma \ne \emptyset$; clearly, $\Sigma $  satisfies (\ref{C}).   To show $\Sigma $ is open, let
  $[\bar p,\bar \beta]\in \Sigma.$ Assume 
$[\bar p, \bar \beta] =\bar \lambda [\bar p_+,\sigma(\bar p_+)]+(1-\bar \lambda)[\bar p_-,\sigma(\bar p_-)],$ where $\bar \lambda\in (0,1)$, $ \bar p_+\in \Delta^+$, $\bar p_-\in \Delta^-$ and $G(p_+,p_-)=0$; thus  
$\bar p_-=\frac{\bar p-\bar \lambda \bar p_+}{1-\bar \lambda}.$ 
Let $q(p_+,\lambda,p)=\frac{ p-\lambda p_+}{1- \lambda},$ and define
\[
F(p_+,\lambda,p,\beta)=\begin{pmatrix} \lambda \sigma(p_+)+(1-\lambda) \sigma(q(p_+,\lambda,p))-\beta\\
 (\sigma(p_+)-\beta) \cdot (p_+-p)\end{pmatrix}
\]
for $p_+\in\R^n$, $\lambda\in (0,1)$, $p\in\R^n$ and $\beta\in \R^n.$ Then, 
\[
\frac{\partial q(p_+,\lambda,p)}{\partial p_+}=-\frac{\lambda}{1-\lambda} I_n, \quad  \frac{\partial q(p_+,\lambda,p)}{\partial \lambda}=\frac{p-p_+}{(1-\lambda)^2},
\] 
\begin{equation}\label{zero-cond}
q(\bar p_+,\bar\lambda,\bar p)=\bar p_-,\quad F(\bar p_+, \bar\lambda,\bar p, \bar \beta)=0,
\end{equation}
and $F$  is $C^1$ near $(\bar p_+,\bar\lambda,\bar p,\bar \beta)$ with partial Jacobian matrix
\[
\frac{\partial F (p_+,\lambda,p,\beta)}{\partial(p_+,\lambda)}\]\[=\begin{bmatrix} \lambda(\sigma'(p_+)-\sigma'(q)) & \sigma(p_+)-\sigma(q)-\sigma'(q)(p_+-q)\\(\sigma'(p_+)^T(p_+-p)+\sigma(p_+)-\beta)^T & 0\end{bmatrix},
\]
where $q=q(p_+,\lambda,p).$ 
From (\ref{zero-cond}),   we have
\[
\det \frac{\partial F (\bar p_+,\bar \lambda,\bar p,\bar \beta)}{\partial(p_+,\lambda)} =\bar \lambda^{n}(1-\bar\lambda) \delta(\bar p_+,\bar p_-)\ne 0.
\]
Hence, by the Implicit Function Theorem, there exist neighborhoods $B_\epsilon(\bar p)$, $B_\epsilon(\bar \beta)$, $ B_\epsilon(\bar p_+)\subset \Delta^+$, $ B_\epsilon(\bar \lambda)\subset (0,1)$ and a $C^1$ function 
\[
(p_+,\lambda)=(p_+(p,\beta),\lambda(p,\beta))\colon B_\epsilon(\bar p)\times B_\epsilon(\bar \beta)\to B_\epsilon(\bar p_+)\times B_\epsilon(\bar \lambda)
\]
 such that,  
for all $(p,\beta)\in B_\epsilon(\bar p)\times B_\epsilon(\bar \beta),$ 
  \[
F(p_+(p,\beta),\lambda(p,\beta),p,\beta)=0,\quad \delta(p_+(p,\beta),p_-(p,\beta))\ne 0,
\]
where $
p_-(p,\beta)=q(p_+(p,\beta),\lambda(p,\beta),p)$ is also $C^1$ in the domain. Since $p_-(\bar p,\bar\beta)=\bar p_-\in \Delta^-$, we may choose an even smaller $\epsilon>0$ so that $p_-(p,\beta)\in \Delta^-$ for  all $[p,\beta]\in B_\epsilon(\bar p)\times B_\epsilon(\bar\beta).$
Hence, for  all $[p,\beta]\in B_\epsilon(\bar p)\times B_\epsilon(\bar\beta),$ it follows  that  $\tilde \lambda =\lambda(p,\beta)\in (0,1)$, $\tilde p_+=p_+(p,\beta)\in \Delta^+$,  $\tilde p_-=p_-(p,\beta)\in \Delta^-$,  $G(\tilde p_+ ,\tilde p_- )=0,$ and 
\[
[p,\beta]=\tilde \lambda [\tilde p_+, \sigma(\tilde p_+)]+(1-\tilde \lambda)[\tilde p_-,\sigma(\tilde p_-)];
\]
thus, by the definition of $\Sigma $, we have  $[p,\beta] \in \Sigma $, and so $\Sigma $ is open.
\end{proof}

\begin{remk}\label{remk-m=1} Suppose that $G(p_0,q_0)=0$ for some $p_0\ne q_0$ in $\Delta;$ this  is guaranteed if $\sigma$ is smooth and {nonmonotone}. Note that $\delta(p_0,q_0)$ is a polynomial of $P=\sigma'(p_0)$ and $Q=\sigma'(q_0);$ write this polynomial as $j(P,Q).$ From $G(p_0,q_0)=0$, it follows  that 
\[
j(I,-I)=2^{n-1}[|\sigma(p_0)-\sigma(q_0)|^2+|p_0-q_0|^2]>0;
\]
thus $j(P,Q)$ is not identically zero. Therefore, for any $\epsilon>0$, there exists a smooth function $\tilde\sigma$ such that  $\|\sigma-\tilde\sigma\|_{C^0}<\epsilon$, $\tilde\sigma$  agrees with $\sigma$  at $p_0$ and $q_0$, $|\tilde\sigma'(p)-\sigma'(p)|<\epsilon$ at both $p=p_0$ and $q_0$, and $\tilde\delta(p_0,q_0)\ne 0.$ Thus for such a function $\tilde\sigma$ one can always find sets $\Delta^\pm;$ so,  by Proposition \ref{R-set},   $\tilde\sigma$ satisfies the Condition (OC) with $N=2.$  
Such  ideas will be greatly explored and generalized   in \cite{Y1} for constructing certain polyconvex gradient flows.
\end{remk}

\subsection{The case of dimension 2}\label{dim=2} Let $n=2;$ then condition (\ref{rk-3}) is equivalent to  $(\beta_j^i)^\perp =q_j^i\alpha_j$ for some $q_j^i\in  \R$, 
 where $\beta^\perp=\beta J=(b,-a)\in \R^2$ for all  $\beta=(a,b)\in \R^2.$ 
Define $\mathcal L\colon \M^{m\times 2}\times (\R^2)^m\to \M^{2m\times 2}$ by
\begin{equation}\label{mapL}
\mathcal L([A,(b^i)])=\begin{bmatrix}A\\BJ\end{bmatrix} \quad \forall\, B=(b^i_k)\in \M^{m\times 2}.
\end{equation}
Then $\mathcal L$ is a linear bijection. 
One easily sees that $(\xi_1,\dots,\xi_N)$ is a $\tau_N$-configuration in $\M^{m\times 2}\times (\R^2)^m$ if and only if $(\mathcal L\xi_1,\dots,\mathcal L\xi_N)$ is a  special $T_N$-configuration in $\M^{2m\times 2},$ whose determining rank-one matrices are given by $C_j=\mathcal L\gamma_j =\begin{pmatrix}p_j\\s_jq_j\end{pmatrix}\otimes \alpha_j$ with $p_j\in \R^m, q_j\in\R^m, \alpha_j\in\R^2\setminus\{0\}$ and $s_j\in\R$ satisfying 
\begin{equation}\label{n=2}
\begin{cases} \sum\limits_{j=1}^N p_j\otimes \alpha_j=0,\quad \sum\limits_{j=1}^N s_jq_j\otimes \alpha_j=0,
\\
\sum\limits_{j=1}^N s_jp_j=0,\quad  \sum\limits_{j=1}^N q_j\otimes \alpha_j\otimes \alpha_j=0;
\end{cases}
\end{equation}
here, for general vectors $a\in\R^p,b\in\R^q$ and $c\in\R^r$, $a\otimes b\otimes c$ denotes the tensor $(a^ib^jc^k)$ for $1\le i\le p, \, 1\le j\le q,\, 1\le k\le r.$ 

\begin{remk}\label{remk-33}  The first two conditions in (\ref{n=2}) are the conditions defining  a general $T_N$-configuration in $\M^{2m\times 2}$, while  the last two conditions are extra requirements for producing a $\tau_N$-configuration in $\M^{m\times 2}\times (\R^2)^m.$ Therefore, a general $T_N$-configuration in $\M^{2m\times 2}$ does not produce a $\tau_N$-configuration in $\M^{m\times 2}\times (\R^2)^m,$ only the special ones satisfying (\ref{n=2}) do.
\end{remk}

The  conditions in (\ref{n=2}) may be degenerate; thus, we will  focus  on certain more specific configurations.

\begin{defn}\label{def-m'} Assume $N\ge 3.$ Let $ M'_N$ be the set of  $T_N$-configurations $(X_1,\dots,X_N)$ in $\M^{2m\times 2}$ whose determining rank-one matrices are given by $C_j= \begin{pmatrix}p_j\\(\alpha_j\cdot \delta) q_j\end{pmatrix}\otimes  \alpha_j,$ where $p_j\in\R^m, \,q_j\in\R^m$, $\alpha_j\in \R^2\setminus\{0\}$,  $\delta\in\R^2$,   at least three of $\alpha_j$'s are  {\em mutually noncollinear}, and 
\begin{equation}\label{tn-1}
 \sum_{j=1}^N p_j\otimes \alpha_j= 0,\quad  \sum_{j=1}^N q_j\otimes \alpha_j\otimes \alpha_j=0.
\end{equation}  
(Thus all conditions in  $(\ref{n=2})$ are automatically satisfied with $s_j=\alpha_j\cdot \delta.$)
We   define  $\mathcal M'_N=\mathcal L^{-1}(M_N')$  to be the  set of  {\em special $\tau_N$-configurations} in $\M^{m\times 2}\times (\R^2)^m.$  
\end{defn}

\begin{lem}\label{2d-count} Let $N\ge 3.$  Then $\dim M'_N=(2m+2)N+1-m$ in $(\M^{2m\times 2})^N.$
\end{lem}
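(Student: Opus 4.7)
The plan is to compute $\dim M'_N$ via a parameter count: I parametrize elements of $M'_N$ explicitly, then subtract the codimensions of the defining relations and the dimensions of the intrinsic gauge redundancies. The natural parameters are $(P,\delta,(p_j,q_j,\alpha_j,\kappa_j)_{j=1}^N)$ with $P\in\M^{2m\times 2}$, $\delta\in\R^2$, and for each $j$: $p_j,q_j\in\R^m$, $\alpha_j\in\R^2\setminus\{0\}$, $\kappa_j>1$. The raw parameter space has dimension $4m+2+N(2m+3)$.

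Next I count the defining relations from (\ref{tn-1}). The relation $\sum_j p_j\otimes\alpha_j=0$ lives in $\M^{m\times 2}$, giving $2m$ scalar equations; the relation $\sum_j q_j\otimes\alpha_j\otimes\alpha_j=0$ lives in $\R^m\otimes S^2\R^2$ (symmetric in the two $\R^2$-factors), giving $3m$ scalar equations. At any configuration in which three of the $\alpha_j$'s are mutually noncollinear---which is built into the definition of $M'_N$---these $5m$ equations are linearly independent: the map $(p_j)\mapsto\sum p_j\otimes\alpha_j$ is surjective onto $\M^{m\times 2}$ because $\{\alpha_j\}$ already spans $\R^2$, and $(q_j)\mapsto\sum q_j\otimes\alpha_j\otimes\alpha_j$ is surjective onto $\R^m\otimes S^2\R^2$ because three mutually noncollinear $\alpha_j$'s produce three linearly independent symmetric rank-one tensors $\alpha_j\otimes\alpha_j$ spanning the $3$-dimensional space $S^2\R^2$. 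Hence the constraints cut codimension exactly $5m$.

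Finally I identify the gauge redundancies. There are two independent families. First, for each $j$ the per-factor rescaling $(p_j,q_j,\alpha_j)\mapsto(p_j/c_j,\,q_j/c_j^2,\,c_j\alpha_j)$ with $c_j\ne 0$ leaves
\[
C_j=\begin{pmatrix} p_j \\ (\alpha_j\cdot\delta)q_j\end{pmatrix}\otimes\alpha_j
\]
invariant and preserves both relations, contributing $N$ gauge directions. Second, the \emph{global} rescaling $(\delta,q_1,\dots,q_N)\mapsto(\lambda\delta,\,q_1/\lambda,\dots,q_N/\lambda)$ with $\lambda\ne 0$ also preserves each $C_j$ (since $(\alpha_j\cdot\delta)q_j$ is bilinear in $(\delta,q_j)$) and the relation $\sum q_j\otimes\alpha_j\otimes\alpha_j=0$ (which scales by $1/\lambda$), contributing one more gauge direction. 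These $N+1$ gauge directions are linearly independent because the first family moves the $\alpha_j$'s while the second does not. Subtracting yields
\[
\dim M'_N=\bigl(4m+2+N(2m+3)\bigr)-5m-(N+1)=(2m+2)N+1-m.
\]

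The main technical point, which I would verify carefully, is that these $N+1$ explicit gauges are \emph{all} the parameter ambiguities, so that the quotient of the constrained parameter space by the gauge action is locally diffeomorphic to $M'_N$ at generic points. This reduces to the standard rank-one decomposition lemma---a nonzero rank-one $C_j$ determines $\alpha_j$ up to a scalar and hence the vector $v_j=(p_j,(\alpha_j\cdot\delta)q_j)\in\R^{2m}$---combined with the observation that on the open set where all $\alpha_j\cdot\delta\ne 0$ the vector $v_j$ recovers $(p_j,q_j)$ uniquely once $(\delta,\alpha_j)$ are fixed, so the only residual freedom is precisely the global $\lambda$-rescaling of $\delta$ absorbable into the $q_j$'s.
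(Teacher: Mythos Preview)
Your proposal is correct and follows essentially the same approach as the paper: both parametrize by $(P,\delta,(p_j,q_j,\alpha_j,\kappa_j))$, count the codimension of the constraints (\ref{tn-1}) as $2m+3m=5m$ using the noncollinearity hypothesis, and quotient by the same $N+1$ gauge redundancies (per-leg rescaling of $\alpha_j$ and the global $\delta$--$q_j$ rescaling). The only cosmetic difference is that the paper verifies the rank-$3$ condition for $\sum q_j\otimes\alpha_j\otimes\alpha_j$ via the explicit determinant identity $\det(\alpha_i^2,\alpha_j^2,\alpha_i\alpha_j)_{i,j}=\prod\det(\alpha_i,\alpha_j)$, whereas you phrase it as the spanning of $S^2\R^2$ by three noncollinear rank-one tensors; these are equivalent.
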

\begin{proof} Let $(X_1,\dots,X_N)\in M'_N$ be determined by $P,C_j$ and $\kappa_j$, with $C_j= \begin{pmatrix}p_j\\(\alpha_j\cdot \delta) q_j\end{pmatrix}\otimes  \alpha_j$ as above. Write  $\alpha_j=(x_j,y_j)\in\R^2.$ Then (\ref{tn-1}) is equivalent to 
\begin{equation}\label{eq-pj-qj}
\begin{cases}
\sum_{j=1}^Nx_jp_j=\sum_{j=1}^Ny_jp_j=0,
\\
\sum_{j=1}^Nx_j^2 q_j=\sum_{j=1}^N y_j^2 q_j=\sum_{j=1}^N x_jy_jq_j=0.
\end{cases}
\end{equation}
Note that, for  $a,b,c,x,y,z\in\R$,
\[
\det\begin{pmatrix}a^2&b^2&c^2\\x^2&y^2&z^2\\ax&by&cz\end{pmatrix}=\det\begin{pmatrix}a&b\\x&y\end{pmatrix}\det\begin{pmatrix}b&c\\y&z\end{pmatrix}\det\begin{pmatrix}c&a\\z&x\end{pmatrix}
\]
and thus, since at least three of $\alpha_j$'s are mutually noncollinear, we have
\[
\rank \begin{pmatrix} x_1&x_2&\dots &x_N\\y_1&y_2&\dots &y_N\end{pmatrix}=2,\;\;  \rank \begin{pmatrix} x_1^2&x_2^2&\dots &x_N^2\\y_1^2&y_2^2&\dots &y_N^2\\
x_1y_1&x_2y_2&\dots &x_Ny_N\end{pmatrix}=3.
\]
Consequently, given $\alpha_j$ as described above, from (\ref{eq-pj-qj}), the dimension of solutions $(p_j)$ in $(\R^m)^N$ is $m(N-2)$ and the dimension of solutions $(q_j)$ in $(\R^m)^N$ is $m(N-3).$ Note that  for  $r_j\ne  0$ the change of variables $\alpha_j\to \frac{1}{r_j}\alpha_j, p_j\to r_jp_j$, $q_j\to r_j^2q_j$ does not change the rank-one matrix $C_j$ and that for $s\ne 0$ the change of variables $\delta\to \frac{1}{s}\delta$ and $q_j\to sq_j$ does not change $C_j$; thus, counting $\alpha_j,\delta, p_j$ and $q_j$, modulo $r_j$ and $s$,  the dimension of $C_j$ equals 
\[
2N+2+m(N-2)+m(N-3)-N-1=(2m+1)N+1-5m.
\]
Finally, 
adding the dimensions of $(C_j), (\kappa_j)$ and $P$ proves the result.
\end{proof}

\begin{remk}   Let $m=n=2$, $K=\mathcal L(\mathbb K)$ and $K_N=K\times \dots \times K$ ($N$-times). By Lemma \ref{2d-count},  $\dim(M'_4\cap K_4) \ge \dim M'_4 + \dim K_4 -\dim (\M^{4\times 2})^4 \ge 7$ and  thus 
$
\dim(\tau(\mathcal M'_4\cap \mathbb K_4))\ge 8=\dim (\M^{2\times 2}\times (\R^2)^2).
$
Therefore,  it  is  plausible that an open set $\Sigma$ in $\M^{2\times 2}\times (\R^2)^2$ can be found so that the Condition (OC)  is satisfied  with  $N=4$  for certain functions  $\sigma$ satisfying (\ref{mono}). However, if   $\sigma=DF$ and $F$ is strongly polyconvex, then a result of  \cite[Proposition 3.11]{KMS} shows that it is impossible for any  $\tau_4$-configuration  to be embedded on $\mathbb K_4.$   In Yan \cite{Y1}, motivated by \cite{Sz},  we show that, for certain polyconvex functions $F$, the embedding of even certain special $\tau_5$-configurations in $\mathbb K_5$ is possible.  
  \end{remk}

\section{General existence  by Baire's category}

Given an open set $G\subset \R^p$ and a function $\psi\in W^{1,\infty}(G;\R^q)$, we use the following notation:
\[
W^{1,\infty}_\psi(G;\R^q)=\{\phi \in W^{1,\infty}(G;\R^q):\;\phi|_{\partial G}=\psi\}.
\]

\begin{thm}\label{baire-1} Let $\sigma\colon\M^{m\times n}\to\M^{m\times n}$ be continuous, $\bar  u \in W^{1,\infty}(\Omega_T;\R^m)$, and let $\mathcal U$ be a nonempty bounded set in $W_{\bar  u }^{1,\infty}(\Omega_T;\R^m)$. 
Assume for each $\epsilon>0$ there exists a subset 
\[
\mathcal U_\epsilon\subset \{ u \in \mathcal U\,|\, \| u _t-\dv \sigma(D u )\|_{H^{-1}(\Omega_T)}<\epsilon\}
\]
which is dense in $\mathcal U$ in the $L^\infty(\Omega_T;\R^m)$-norm. Then the solution set 
\[
\mathcal S=\{ u \in W_{\bar  u }^{1,\infty}(\Omega_T;\R^m)\,|\,\mbox{\em $ u $ is Lipschitz solution of (\ref{sys0})}\}  
\]
is also dense (thus nonempty) in $\mathcal U$ in the $L^\infty(\Omega_T;\R^m)$-norm. 
\end{thm}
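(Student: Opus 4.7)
The plan is a Baire-category argument on the complete metric space $\X:=\overline{\mathcal U}^{\,L^\infty(\Omega_T;\R^m)}$. Since $\mathcal U$ is bounded in $W^{1,\infty}$ and the embedding $W^{1,\infty}(\Omega_T)\hookrightarrow C^0(\bar\Omega_T)$ is compact, every $L^\infty$-convergent sequence in $\mathcal U$ in fact converges uniformly, its limit lies in $W^{1,\infty}_{\bar u}(\Omega_T;\R^m)$, and the Lipschitz bound is preserved; thus $\X$ is a closed, $W^{1,\infty}$-bounded subset of $W^{1,\infty}_{\bar u}(\Omega_T;\R^m)$ and is a complete metric space under $\|\cdot\|_{L^\infty}$. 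The goal is to exhibit a dense $G_\delta$ subset of $\X$ consisting entirely of weak solutions of (\ref{sys0}), which immediately yields density of $\mathcal S$ in $\mathcal U$ in the $L^\infty$-norm.

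The heart of the argument is to show that the gradient map $D\colon(\X,\|\cdot\|_{L^\infty})\to L^2(\Omega_T;\M^{m\times n})$ is of Baire class $1$. Fix once and for all a $W^{1,\infty}$-extension $\bar U\colon\R^{n+1}\to\R^m$ of $\bar u$, and to every $u\in\X$ associate $\tilde u:=\bar U+\chi_{\Omega_T}(u-\bar u)\in W^{1,\infty}_{\rm loc}(\R^{n+1};\R^m)$, so that $\tilde u=u$ on $\Omega_T$ and $\|\tilde u-\tilde v\|_{L^\infty(\R^{n+1})}=\|u-v\|_{L^\infty(\Omega_T)}$. For a standard mollifier $\rho_j$, set $D_ju:=(\tilde u*\nabla\rho_j)\big|_{\Omega_T}$. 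Young's inequality gives
\[
\|D_ju-D_jv\|_{L^2(\Omega_T)}\le \|\nabla\rho_j\|_{L^1}\,|\Omega_T|^{1/2}\,\|u-v\|_{L^\infty(\Omega_T)},
\]
so $D_j\colon\X\to L^2$ is Lipschitz, and $D_ju=((D\tilde u)*\rho_j)\big|_{\Omega_T}\to Du$ in $L^2(\Omega_T;\M^{m\times n})$ as $j\to\infty$. Hence $D$ is a pointwise $L^2$-limit of continuous maps into a Polish space, and by the classical theorem on Baire-one maps its set of continuity points $\G\subset\X$ is a dense $G_\delta$.

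To finish I verify $\G\subset\mathcal S$. Given $u\in\G$, use density of $\mathcal U$ in $\X$ together with density of each $\mathcal U_{1/k}$ in $\mathcal U$ to pick $u_k\in\mathcal U_{1/k}$ with $u_k\to u$ in $L^\infty$. Continuity of $D$ at $u$ forces $Du_k\to Du$ in $L^2(\Omega_T;\M^{m\times n})$; continuity of $\sigma$, the uniform $L^\infty$-bound on $\{Du_k\}$, and dominated convergence (applied along every a.e.-convergent subsequence and propagated to the full sequence) give $\sigma(Du_k)\to\sigma(Du)$ in $L^2$, hence $\dv\sigma(Du_k)\to\dv\sigma(Du)$ in $H^{-1}(\Omega_T)$. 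Likewise $u_k\to u$ in $L^2$ yields $(u_k)_t\to u_t$ in $H^{-1}(\Omega_T)$, since $\partial_t\colon L^2(\Omega_T)\to H^{-1}(\Omega_T)$ is bounded. Therefore
\[
\|u_t-\dv\sigma(Du)\|_{H^{-1}(\Omega_T)}\le \liminf_{k\to\infty}\|(u_k)_t-\dv\sigma(Du_k)\|_{H^{-1}(\Omega_T)}\le \liminf_{k\to\infty}\tfrac{1}{k}=0,
\]
so $u\in\mathcal S$, giving $\G\subset\mathcal S$; since $\G$ is dense in $\X\supset\mathcal U$, the solution set $\mathcal S$ is dense in $\mathcal U$ in the $L^\infty$-norm. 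I expect the subtle step to be the Baire-one claim, in particular confirming that the $D_j$ are genuinely $L^\infty$-to-$L^2$ Lipschitz on $\X$ and converge to the actual gradient (not merely weakly); the trace-matched extension $\tilde u$ is precisely what makes this routine, because only then does the difference $\tilde u-\tilde v$ vanish outside $\Omega_T$ and Young's inequality control the mollification globally.
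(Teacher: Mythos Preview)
Your proof is correct and follows essentially the same Baire-category strategy as the paper: take the $L^\infty$-closure $\mathcal X$ of $\mathcal U$, show the gradient map is Baire-one via mollification of the zero-extended difference $u-\bar u$, and verify that continuity points are solutions by approximation from $\mathcal U_{1/k}$. The only cosmetic difference is that the paper treats the full space-time gradient $\nabla=(D,\partial_t)$ as the Baire-one map into $L^2$ (so that $u^j_t\to u_t$ in $L^2$ directly), whereas you work with the spatial gradient $D$ alone and recover the time-derivative convergence in $H^{-1}$ via boundedness of $\partial_t\colon L^2\to H^{-1}$; both routes are valid.
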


\begin{proof} Let $\mathcal X$ be the closure of $\mathcal U$ in $L^\infty(\Omega_T;\R^m).$ Then $(\mathcal X,L^\infty)$ is a complete metric space. Since $\mathcal U$ is bounded in 
$W^{1,\infty}(\Omega_T;\R^m)$, it follows that $\mathcal X\subset W_{\bar u}^{1,\infty}(\Omega_T;\R^m)$.   By the density assumption, $\mathcal U_\epsilon$ is also dense in $(\mathcal X,L^\infty).$ Let  $\mathcal Y=L^2(\Omega_T;(\R^{n+1})^m)$; then $(\mathcal Y,L^2)$ is a complete metric space as well. Since $\mathcal X\subset W_{\bar u}^{1,\infty}(\Omega_T;\R^m)$, the operator 
\[
\nabla =(D,\partial_t)\colon \mathcal X\to \mathcal Y
\]
is well-defined. Given $u\in \mathcal X$, write $u=\bar u+\phi$, where $\phi\in W_0^{1,\infty}(\Omega_T;\R^m)$ extended to be zero outside of $\Omega_T.$ Let $\rho_k\in C^\infty_c(\R^{n+1})$   be the standard mollifiers on $\R^{n+1}$ as $k\to \infty$ and let $\nabla_k u=\nabla \bar u + \nabla (\rho_k * \phi).$ Then
\[
\nabla_k u= \nabla \bar u + \rho_k * \nabla\phi= \nabla \bar u + \phi * \nabla \rho_k.
\]
Then $\|\nabla_k u_1-\nabla_k u_2\|_{L^2} \le C_k \|u_1-u_2\|_{L^\infty}$ for all $u_1,u_2\in \mathcal X$; hence $\nabla_k\colon \mathcal X\to \mathcal Y$ is continuous; furthermore, $\|\nabla_k u- \nabla u\|_{L^2}\to 0$ as $k\to \infty$  for all $u\in \mathcal X.$ This proves that $\nabla\colon (\mathcal X,L^\infty)\to (\mathcal Y,L^2)$ is the pointwise limit of a sequence of continuous functions $\nabla_k \colon (\mathcal X,L^\infty)\to (\mathcal Y,L^2)$ and thus 
 is a Baire-one function \cite{D}. Let $\mathcal G\subset \mathcal X$ be the set of continuity points of $\nabla.$ Then $\mathcal G$ is dense in $(\mathcal X,L^\infty).$ To complete the proof, we show that $\mathcal G\subset \mathcal S.$  
  Let $u\in \mathcal G.$ Since $\mathcal U_\epsilon$ is dense in $\mathcal X$, there exists $u^j\in \mathcal U_{1/j}\subset \mathcal X$ such that $\|u^j-u\|_{L^\infty}<1/j\to 0.$   Since $\nabla$ is continuous at $u\in\mathcal G$, we have $\|\nabla u^j- \nabla u\|_{L^2}\to 0.$  Hence $\sigma(Du^j)\to \sigma(Du) $ and $u^j_t\to u_t$ in $L^2(\Omega_T)$; thus
 \[
 \|u_t-\dv\sigma(Du)\|_{H^{-1}(\Omega_T)}=\lim_{j\to\infty} \|u^j_t-\dv\sigma(Du^j)\|_{H^{-1}(\Omega_T)}=0,
 \]
which proves that $u$ is a Lipschitz solution of (\ref{sys0}) and hence $u\in \mathcal S.$
 \end{proof}
 

   The $H^{-1}$-norm involved in Theorem \ref{baire-1} can be estimated as follows.
   
\begin{lem}\label{lem3}  Let $u\in W^{1,\infty}(\Omega_T;\R^m)$ and $v^i\in W^{1,\infty}(\Omega_T;\R^n)$ for $i=1,\dots,m.$  If $u^i=\dv v^i$ a.e. in $\Omega_T$ for each $i$, then
\[
\|u_t-\dv \sigma(Du)\|_{H^{-1}(\Omega_T)} \le C \|v_t-\sigma(Du) \|_{L^2(\Omega_T)}.
\]
\end{lem}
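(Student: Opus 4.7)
The plan is to exploit the hypothesis $u^i = \dv v^i$ to rewrite the residual $u_t - \dv\sigma(Du)$ as a pure spatial divergence, so that it gains one order of regularity relative to the function being differenced. The desired bound will then follow from a single integration by parts against $H^1_0$ test functions.

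First, I would verify the identity $u^i_t = \dv(v^i_t)$ in the sense of distributions on $\Omega_T$. Since $v^i \in W^{1,\infty}(\Omega_T;\R^n)$, both the time derivative $v^i_t$ and the spatial weak derivatives $v^i_{x_k}$ belong to $L^\infty(\Omega_T)$, and the mixed weak partials commute. Therefore, differentiating the a.e. identity $u^i = \dv v^i = \sum_k (v^i)^k_{x_k}$ in $t$ yields
\[
u^i_t \;=\; \sum_{k=1}^n \partial_{x_k}(v^i_t)^k \;=\; \dv(v^i_t) \qquad \text{in } \mathcal D'(\Omega_T).
\]
Combining this with the spatial divergence of $\sigma^i(Du)\in L^\infty(\Omega_T;\R^n)$, I obtain the crucial reformulation
\[
u^i_t - \dv \sigma^i(Du) \;=\; \dv\bigl(v^i_t - \sigma^i(Du)\bigr) \qquad \text{in } \mathcal D'(\Omega_T),
\]
for each $i = 1,\dots,m$.

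Next, I would evaluate the $H^{-1}(\Omega_T)$-norm componentwise by duality with $H^1_0(\Omega_T)$. For any $\phi \in C^\infty_c(\Omega_T)$ an integration by parts (legitimate since $v^i_t - \sigma^i(Du) \in L^\infty \subset L^2$ and $\phi$ vanishes on $\partial\Omega_T$) gives
\[
\langle u^i_t - \dv\sigma^i(Du),\,\phi\rangle \;=\; -\int_{\Omega_T} \bigl(v^i_t - \sigma^i(Du)\bigr)\cdot \nabla_x \phi \;dx\,dt,
\]
and Cauchy--Schwarz, together with $\|\nabla_x \phi\|_{L^2} \le \|\phi\|_{H^1_0(\Omega_T)}$, yields
\[
\bigl|\langle u^i_t - \dv\sigma^i(Du),\,\phi\rangle\bigr| \;\le\; \|v^i_t - \sigma^i(Du)\|_{L^2(\Omega_T)} \,\|\phi\|_{H^1_0(\Omega_T)}.
\]
Taking the supremum over $\phi$ with $\|\phi\|_{H^1_0}\le 1$ and summing (or taking the vector norm) over $i=1,\dots,m$ gives the stated estimate with a dimensional constant $C = C(m)$.

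There is no real obstacle here; the one point to handle with care is the rigorous justification of the commutation $\partial_t \dv = \dv\,\partial_t$ at the $W^{1,\infty}$ level, which is standard but should be mentioned, and the fact that $H^{-1}(\Omega_T)$ is the dual of $H^1_0(\Omega_T)$ so that compactly supported test functions suffice for the density.
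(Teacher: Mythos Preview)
Your proposal is correct and follows essentially the same route as the paper: both arrive at the identity $\langle u^i_t-\dv\sigma^i(Du),\phi\rangle = -\int_{\Omega_T}(v^i_t-\sigma^i(Du))\cdot D\phi$ and then apply Cauchy--Schwarz. The only cosmetic difference is that the paper reaches this identity by shifting both derivatives onto the test function (writing $\langle u^i_t,\phi\rangle=-\int u^i\phi_t=-\int(\dv v^i)\phi_t$ and integrating by parts in $x$ then $t$), whereas you first record the distributional identity $u^i_t=\dv(v^i_t)$ and then test once; these are the same computation unpacked in different orders.
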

 
 \begin{proof} For all $\phi\in H^1_0(\Omega_T)$ and $i=1,\dots,m$, from $u^i=\dv v^i$, one has
\[
\langle u^i_t-\dv \sigma^i(Du),\phi\rangle =-\int_{\Omega_T} u^i\phi_t \,dxdt +\int_{\Omega_T} \sigma^i(Du)\cdot D\phi\,dxdt
\]
\[
=  \int_{\Omega_T} (\sigma^i(Du)-v^i_t)\cdot D\phi\,dxdt \le  \|v^i_t-\sigma^i(Du) \|_{L^2(\Omega_T)}\|D\phi\|_{L^2(\Omega_T)}.
\]
The result follows  from the definition of $
\|u^i_t-\dv \sigma^i(Du)\|_{H^{-1}(\Omega_T)}$.
\end{proof}

 Let $\sigma\colon \M^{m\times n}\to \M^{m\times n}$  satisfy   Condition  (OC) with open set $\Sigma \subset \M^{m\times n}\times (\R^n)^m$  as given in Definition \ref{cond-C}.  Let $\bar  u \in C^1(\bar\Omega_T;\R^m)$ and $\bar  v ^i\in C^1(\bar\Omega_T;\R^n)$ satisfy
\begin{equation}\label{suff-01}
\bar u^i=\dv \bar  v ^i,\quad [D\bar  u ,\,(\bar  v ^i_t)]\in \Sigma  \quad \mbox{on $\bar\Omega_T$}\quad (i=1,\dots,m).
\end{equation}
In order to construct  admissible subsolution sets $\mathcal U$ and $\mathcal U_\epsilon$ as required  in Theorem \ref{baire-1}, we employ the  piecewise $C^1$ functions on finitely many pieces.

\begin{defn} Let $E\subset \R^p$ be an open set and $f\colon E\to\R^q.$ 
We say  that $f$ is (finitely) {\em piecewise $C^1$ on  $E$} and write $f\in C^1_{piece}(E)$ if there exists a finite family of  disjoint open sets $\{E_1,\dots, E_\mu\}$ in $E$ such that
\[
|\partial E_j|=0,\quad f\in C^1(\bar E_j) \; \; \forall\, j=1,\dots,\mu, \quad |E\setminus \cup_{j=1}^\mu E_j|=0.
\]
In this case, we say that $\{E_1,\dots, E_\mu\}$ is a {\em partition} for $f$.
In the following, for simplicity, we set
\[
\begin{split} &C^1_{\bar  u }(\bar \Omega_T;\R^m)= W_{\bar  u }^{1,\infty}(\Omega_T;\R^m) \cap C^1(\bar\Omega_T),\\
&C^1_{\bar  v ^i, pc}(\Omega_T;\R^n)=W_{\bar  v ^i}^{1,\infty}(\Omega_T;\R^n) \cap C_{piece}^1(\Omega_T).\end{split}
\]
\end{defn}

\begin{thm}\label{density1} Let  $a>\|\bar  u _t\|_{L^\infty(\Omega_T)}$ be  fixed. Define
\[
\mathcal U=\left \{u \in C^1_{\bar  u }(\bar\Omega_T;\R^m) \; \Big | \; \begin{aligned} 
&\mbox{$\| u _t\|_{L^\infty(\Omega_T)}<a;\; \; \exists v ^i\in C^1_{\bar  v ^i, pc}(\Omega_T;\R^n)$ with}  \\ 
&\mbox{partition $\{E_j\}_{j=1}^\mu$ such that $u^i=\dv  v ^i$ and}\\
&\mbox{$[D u ,( v ^i_t)] \in\Sigma$ on  $\bar E_j \; \; \forall\, 1\le i\le m, \,1\le j \le \mu$}
\end{aligned}
\right\}
\]
and, for  $\epsilon>0,$ 
\[
\mathcal U_\epsilon=\left \{  u \in \mathcal U\;\Big | \; \begin{aligned} 
&\mbox{$\exists\,  v ^i\in C^1_{\bar  v ^i, pc}(\Omega_T;\R^n)$ with partition $\{E_j\}_{j=1}^\mu$ such that}\\
&\mbox{$u^i=\dv  v ^i,\; [D u ,( v ^i_t)] \in\Sigma $ on $\bar E_j, \; \| v ^i_t-\sigma^i(D u ) \|_{L^2(\Omega_T)}<\epsilon$} 
\end{aligned}
\right\}.
\]
Then
for each $\epsilon>0$ the set $\mathcal U_\epsilon$ is dense in $\mathcal U$ under the $L^\infty$-norm.  
\end{thm}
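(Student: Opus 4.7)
The plan is to show that, for every $u\in\mathcal U$ and every $\eta>0$, there exists $\tilde u\in\mathcal U_\epsilon$ with $\|\tilde u-u\|_{L^\infty(\Omega_T)}<\eta$, by applying Theorem \ref{thm1} locally on a fine covering of each piece $E_j$ of the partition for $u$. First, I would localize: on each $\bar E_j$ the continuous map $\Phi(x,t)=[Du(x,t),(v^i_t(x,t))]$ takes values in the open set $\Sigma$, so by uniform continuity and a Vitali-type disjoint-ball cover, $E_j$ can be exhausted (up to a null set) by small open balls $\{G_{jk}\}$ on which $\Phi$ is arbitrarily close to a base value $\Phi_{jk}\in\Sigma$. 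Condition (OC) supplies, for each $\Phi_{jk}$, a $\tau_N$-configuration $(\xi_1^{jk},\dots,\xi_N^{jk})$ in $\mathbb K$ with $\Phi_{jk}\in\tau(\xi_1^{jk},\dots,\xi_N^{jk})\subseteq\Sigma$. Lemma \ref{lem-X-s} lifts this to an admissible $T_N$-configuration in $\M^{(m+nm)\times(n+1)}$; I would choose the scaling parameter $s$ small enough that the $u_t$-components of the lifted matrices are $O(s)$, which together with $\|u_t\|_{L^\infty(\Omega_T)}<a$ preserves $\|\tilde u_t\|_{L^\infty(\Omega_T)}<a$ after perturbation.

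To keep the perturbed data inside $\Sigma$ on the success sets (where the gradient otherwise hits $\mathbb K$ and may exit $\Sigma$), I would next shrink the configuration by replacing each $\kappa_j$ with some $\kappa_j'<\kappa_j$ close to $\kappa_j$, obtaining matrices $X_k'$ whose projections $\mathbb P(X_k')$ lie in the open segments $(\xi_k^{jk},\rho_k^{jk})\subset\tau\subseteq\Sigma$ yet are within arbitrarily small distance of $\xi_k^{jk}\in\mathbb K$. Applying Theorem \ref{thm1} on each $G_{jk}$ with this shrunken configuration and tolerance $\epsilon_0>0$ yields $\omega_{jk}=[\varphi_{jk},(\psi^i_{jk})]\in C^\infty_c(G_{jk})$ with small $L^\infty$ norm, divergence-free $\psi^i_{jk}$, zero-mean $\varphi_{jk}$, and a finite family of disjoint open success sets $V^k_{jk}$ on which $\nabla w+\nabla\omega_{jk}=X_k'$ exactly; outside these success sets, $\nabla w+\nabla\omega_{jk}\in[\bar T]_{\epsilon_0}$.

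Then I would set $\tilde u=u+\sum_{j,k}\varphi_{jk}$ and $\tilde v^i=v^i+\sum_{j,k}(\psi^i_{jk}+p_i\alpha_{jk}h_{jk})$, where the last corrector exploits the identity $\varphi^i=p_i(\alpha\cdot Dh)=\dv(p_i\alpha h)$ built into the $\mathcal P[h]$ construction of Lemma \ref{lem1}, restoring $\dv\tilde v^i=\tilde u^i$ pointwise while disturbing $\nabla\omega$ only at lower order (the corrector is $L^\infty$-small because $h_{jk}$ is). The new partition for $(\tilde u,\tilde v^i)$ is built from the open sets $V^k_{jk}$ together with their open complements in the $G_{jk}$'s, refined by the original $\{E_j\}$. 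On each $V^k_{jk}$, $[D\tilde u,(\tilde v^i_t)]=\mathbb P(X_k')\in\Sigma$ is within arbitrarily small distance of $\mathbb K$, so $|\tilde v^i_t-\sigma^i(D\tilde u)|$ is uniformly small there; on the failure region (of total relative measure $<\epsilon_0$) the integrand is bounded by openness of $\Sigma$ and by boundedness of $\bar T$. A coordinated choice of the covering scale, $\epsilon_0$, the shrinkage of $\kappa_j$, and $s$ then gives both $\|\tilde v^i_t-\sigma^i(D\tilde u)\|_{L^2(\Omega_T)}<\epsilon$ and $\|\tilde u-u\|_{L^\infty(\Omega_T)}<\eta$, hence $\tilde u\in\mathcal U_\epsilon$.

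The main technical obstacle is reconciling the homogeneous convex-integration framework (in which the lifted $T_N$-configurations live in $\mathcal K=\mathcal K(0)$, forcing $\dv\psi^i=0$) with the pointwise nonhomogeneous constraint $\tilde u^i=\dv\tilde v^i$ required in the definition of $\mathcal U$. The explicit identity $\varphi^i=\dv(p_i\alpha h)$ inherent in the $\mathcal P[h]$ construction furnishes a small divergence corrector that repairs the constraint pointwise without spoiling the leading-order matrix values of $\nabla\omega$. A secondary difficulty, handled by the $\kappa_j\to\kappa_j'$ shrinkage, is arranging that the closures of all new partition pieces remain inside $\Sigma$, since the vertices $\xi_k^{jk}\in\mathbb K$ of the $\tau_N$-configuration generally lie on $\partial\tau$ and need not belong to $\Sigma$; coordinating this shrinkage with the $L^2$-tolerance $\epsilon$ is the most delicate bookkeeping step.
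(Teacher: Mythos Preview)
Your overall architecture is correct and mirrors the paper's: localize via a Vitali cover of each piece $E_j$, invoke Condition~(OC) at each base point, lift the $\tau_N$-configuration via Lemma~\ref{lem-X-s} with a small scaling parameter $s$ to keep $\|\tilde u_t\|_{L^\infty}<a$, shrink the configuration so that the full $\epsilon_0$-neighborhood of $\bar T$ projects into $\Sigma$, apply Theorem~\ref{thm1}, and estimate. The shrinkage $\kappa_j\mapsto\kappa_j'$ you describe is exactly the paper's parameter $\tau$ in $X_j^{s,\tau}=(1-\tau)\tilde X_j^s+\tau\tilde P_j^s$.

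Where you genuinely diverge from the paper is in repairing the constraint $\tilde u^i=\dv\tilde v^i$. You propose an explicit corrector $p_i\alpha_{jk}h_{jk}$ extracted from the $\mathcal P[h]$ formula of Lemma~\ref{lem1}. There is a real bookkeeping issue here: the $\varphi$ produced by Theorem~\ref{thm1} is not a single $\mathcal P[h]$ but a finite sum $\sum_\ell\mathcal P[h_\ell]$ over $(k+1)N$ iterated applications of Lemma~\ref{lem1}, each with its own $(p_\ell,\alpha_\ell,h_\ell)$ and with cutoffs $\zeta_\ell$ supported on nested domains whose diameters shrink geometrically (by the factor $\mu=\lambda_1\cdots\lambda_N$). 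Your corrector therefore has to be $\sum_\ell (p_\ell)_i\alpha_\ell h_\ell$, and the claim that its $t$-derivative is small requires bounding $|(\zeta_\ell)_t f_\ell|$ uniformly in $\ell$, which forces you to reopen the proofs of Lemma~\ref{lem1} and Theorem~\ref{thm1} and choose each $f_\ell$ small relative to the growing $|(\zeta_\ell)_t|$. This can be done, but it is content not contained in the statement of Theorem~\ref{thm1}; the sentence ``the corrector is $L^\infty$-small because $h_{jk}$ is'' hides this.

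The paper sidesteps all of this with a more modular device. It applies Theorem~\ref{thm1} once on the \emph{unit} cube $Q_0$, uses only the zero-mean property~(c), and invokes a Bogovskii-type right inverse of the divergence (Lemma~\ref{div-inv}) to produce $g=\mathcal R\varphi$ with $\dv g=\varphi$ and $\|g_t\|_{L^\infty}\le C_n\|\varphi_t\|_{L^\infty}$. It then \emph{rescales} everything to the small cube $Q_{\bar y,l}$; the point is that under the rescaling $\mathcal R_{\bar y,l}$ the corrector picks up an extra factor of $l$, giving $\|\tilde g^i_t\|_{L^\infty}\le C_n\,l\,\|\tilde\varphi^i_t\|_{L^\infty}$ (Corollary~\ref{div-inv-1}), which is exactly the smallness needed to keep $[D\tilde u,(\tilde v^i_t)]$ in $\Sigma$ and to control the $L^2$-defect. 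This uses Theorem~\ref{thm1} as a black box and never looks at the internal $h_\ell$'s. Your route trades that clean rescaling/Bogovskii argument for a finer tracking of the convex-integration microstructure; both reach the goal, but the paper's is shorter and decouples the divergence repair from the oscillatory construction.
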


The proof of this density theorem will be given in the next final section. It is easily seen that our main theorem, Theorem \ref{thm-main-1},  follows by combining 
 Theorem \ref{baire-1} and Theorem \ref{density1}.

\section{The density result: Proof of Theorem \ref{density1}}
This final section is devoted to the proof of the density result: Theorem \ref{density1}. We first discuss some useful constructions. 
Let $\tilde Q_0=(0,1)^n\subset \R^n$ and $Q_0=\tilde Q_0\times (0,1)\subset \R^{n+1}$ be the open unit cubes.  For $y\in \R^{n+1}$ and $l>0$, let $Q_{y,l}=y+lQ_0.$ 
 
The following result  can be found in {Kim \&  Yan} \cite{KY1}.

\begin{lem}[{\cite[Theorem 2.3]{KY1}}]\label{div-inv}   Let $u\in W^{1,\infty}_0(Q_0)$ with  $\int_{\tilde Q_0} u (x,t) dx=0$ for all $t\in (0,1).$ Then there exists a function $ v =\mathcal R  u\in W^{1,\infty}_0(Q_0;\R^n)$ such that $\dv  v =u$ a.e.\,in $Q_0$ and
\begin{equation}\label{div-1}
\| v _t \|_{L^\infty(Q_0)}  \le C_n  \|u_t\|_{L^\infty(Q_0)},
\end{equation}
where $C_n$ is a constant depending only on $n$.   Moreover, if in addition $u\in C^1(\bar Q_0)$ then $ v =\mathcal R  u \in C^1(\bar Q_0;\R^{n}).$
\end{lem}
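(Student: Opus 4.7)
The plan is to construct $\mathcal R$ as an explicit linear operator by induction on the spatial dimension $n$, in such a way that $\mathcal R$ acts only on the $x$-variables (pointwise in $t$). This way $\mathcal R$ trivially commutes with $\partial_t$, and the time estimate \eqref{div-1} reduces to an $L^\infty\to L^\infty$ bound for $\mathcal R$ applied to $u_t$, which still has zero $x$-mean since $\int_{\tilde Q_0}u_t\,dx=\partial_t\int_{\tilde Q_0}u\,dx=0.$

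\emph{Base case $n=1$.} Set $v(x_1,t)=\int_0^{x_1}u(s,t)\,ds$. The hypothesis $\int_0^1 u(\cdot,t)\,ds=0$ makes $v(1,t)=0$, and $v(0,t)=0$ trivially, so $v\in W_0^{1,\infty}(Q_0)$ with $v_{x_1}=u$ and $\|v_t\|_{L^\infty}\le \|u_t\|_{L^\infty}$.

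\emph{Inductive step $n\ge 2$.} Fix $\phi\in C_c^\infty(0,1)$ with $\int_0^1\phi=1$, set $\Phi(x_1)=\int_0^{x_1}\phi$, and write $\hat x=(x_2,\dots,x_n)$. Define the slice average $\bar u(\hat x,t)=\int_0^1 u(s,\hat x,t)\,ds$ and
\[
v_1(x,t)=\int_0^{x_1}\!\bigl[u(s,\hat x,t)-\phi(s)\bar u(\hat x,t)\bigr]\,ds.
\]
Then $v_1$ vanishes at $x_1=0$ and (by the choice of $\phi$) at $x_1=1$, and $\partial_{x_1}v_1=u-\phi(x_1)\bar u$. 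Since $\int_{(0,1)^{n-1}}\bar u\,d\hat x=\int_{\tilde Q_0}u\,dx=0$, the inductive hypothesis applied to $\bar u$ on the lower-dimensional cube produces $\tilde v=(\tilde v_2,\dots,\tilde v_n)\in W_0^{1,\infty}((0,1)^{n-1}\times(0,1);\R^{n-1})$ with $\sum_{k\ge 2}\partial_{x_k}\tilde v_k=\bar u$. Put $v_k(x,t)=\phi(x_1)\tilde v_k(\hat x,t)$ for $k\ge 2$; then $v_k$ vanishes on $\partial\tilde Q_0$ (via $\phi$ for $x_1\in\{0,1\}$ and via $\tilde v_k$ for $x_k\in\{0,1\}$, $k\ge 2$), and $\sum_{k\ge 2}\partial_{x_k}v_k=\phi(x_1)\bar u$. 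Hence $\dv v=u$ a.e.\ in $\tilde Q_0$. Because $u\in W_0^{1,\infty}(Q_0)$ vanishes at $t=0,1$ and the whole construction is linear in $u$, we get $v(\cdot,0)=v(\cdot,1)=0$ and so $v=\mathcal R u\in W_0^{1,\infty}(Q_0;\R^n)$.

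The $L^\infty$-bound for $\mathcal R$ follows by induction: $\|v_1\|_{L^\infty}\le (1+\|\Phi\|_{L^\infty})\|u\|_{L^\infty}$ and $\|v_k\|_{L^\infty}\le \|\phi\|_{L^\infty}\|\tilde v_k\|_{L^\infty}\le \|\phi\|_{L^\infty}C_{n-1}\|u\|_{L^\infty}$, giving $\|\mathcal R f\|_{L^\infty}\le C_n\|f\|_{L^\infty}$ for every mean-zero $f$. Applying this to $f=u_t$ yields \eqref{div-1}. Finally, the $C^1$ statement is automatic: the construction uses only indefinite integration in $x_1$, slice averaging in $x_1$, and multiplication by smooth functions of $x_1$, each of which preserves $C^1(\bar Q_0)$ regularity. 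The main obstacle is the bookkeeping to ensure that at each recursive stage the boundary values on $\partial\tilde Q_0$ vanish and that the zero-mean hypothesis is correctly propagated to the lower-dimensional problem; both are handled by the judicious choice of the fixed cutoff $\phi$ with $\int_0^1\phi=1$.
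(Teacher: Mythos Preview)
The paper does not give its own proof of this lemma; it simply quotes the result from \cite[Theorem 2.3]{KY1}. Your argument is correct and self-contained: the inductive Bogovski\u{\i}-type construction on the cube---peel off the $x_1$-variable using a fixed profile $\phi\in C_c^\infty(0,1)$ with $\int_0^1\phi=1$, then recurse on the slice average $\bar u$---produces a linear right-inverse of the divergence acting pointwise in $t$, from which \eqref{div-1} follows immediately since $(\mathcal R u)_t=\mathcal R(u_t)$ and $u_t$ inherits the zero-mean condition. The boundary and $C^1$ checks you outline are all accurate; in particular you correctly observe that $v_1$ vanishes on the faces $x_j\in\{0,1\}$ ($j\ge 2$) because $u$ (hence $\bar u$) does, and that $v_k$ ($k\ge 2$) vanishes on $x_1\in\{0,1\}$ because $\phi$ has compact support.

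This is essentially the same construction as in \cite{KY1}, so there is nothing substantively different to compare.
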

  
Given $\bar y\in \R^{n+1}$ and $l>0,$ let  $\mathcal L_{\bar y,l}  \colon W^{1,\infty}(Q_0;\R^d)\to W^{1,\infty}(Q_{\bar y, l};\R^d)$ be the {\em rescaling operator}  defined  by
 \[
(\mathcal L_{\bar y,l} f)(y)=lf(\frac{y-\bar y}{l})\quad (y\in Q_{\bar y,l})
\]
for all $f\in W^{1,\infty}(Q_0;\R^d).$ Note that $\nabla (\mathcal L_{\bar y,l} f)(y)=(\nabla f)(\frac{y-\bar y}{l})$ for $y\in Q_{\bar y, l}.$

From Lemma \ref{div-inv}, the following result is immediate.
 
 \begin{cor}\label{div-inv-1}  Let $\phi\in W^{1,\infty}_0(Q_{0})$ with $\int_{\tilde Q_0} \phi (x,t)\,dx=0$ for all $t\in (0,1)$. Let $\tilde \phi =\mathcal L_{\bar y,l}\phi$ and 
 $\tilde g=  \mathcal R_{\bar y, l} \phi:=l \mathcal L_{\bar y,l} (\mathcal R \phi)$ in $W^{1,\infty}_0(Q_{\bar y, l};\R^n).$
  Then $\dv \tilde g=\tilde \phi$ a.e.\,in $Q_{\bar y,l}$ and
\begin{equation}\label{div-2}
\|\tilde g_t \|_{L^\infty(Q_{\bar y, l})}  \le C_n l  \|\tilde \phi_t\|_{L^\infty(Q_{\bar y, l})}.
\end{equation}
Moreover, if in addition $\phi\in C^1(\bar Q_{0})$ then $\tilde g =\mathcal R_{\bar y, l} \phi \in C^1(\bar Q_{\bar y, l};\R^{n}).$
\end{cor}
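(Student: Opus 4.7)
\medskip

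\noindent\textbf{Proof plan for Corollary \ref{div-inv-1}.} The plan is to reduce the statement to Lemma \ref{div-inv} on the unit cube $Q_0$ by a careful bookkeeping of how the divergence, the time derivative, and the $L^\infty$-norm transform under the affine change of variables $y\mapsto (y-\bar y)/l$ that relates $Q_0$ to $Q_{\bar y,l}$. The whole proof is a direct computation; there is no soft analytic obstacle.

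First I would apply Lemma \ref{div-inv} to $\phi$ itself. Since $\phi\in W^{1,\infty}_0(Q_0)$ and $\int_{\tilde Q_0}\phi(x,t)\,dx=0$ for a.e.\ $t\in(0,1)$, this provides a function $v=\mathcal R\phi\in W^{1,\infty}_0(Q_0;\R^n)$ with $\dv v=\phi$ a.e.\ in $Q_0$ and
\[
\|v_t\|_{L^\infty(Q_0)}\le C_n\|\phi_t\|_{L^\infty(Q_0)}.
\]
Moreover, if $\phi\in C^1(\bar Q_0)$ then $v\in C^1(\bar Q_0;\R^n)$. The task is now to verify that the rescaling $\tilde g:=l\,\mathcal L_{\bar y,l}v$ inherits the analogous properties on the cube $Q_{\bar y,l}$.

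Next, unraveling the definition, $\tilde g(y)=l^2\,v\!\left(\frac{y-\bar y}{l}\right)$ and $\tilde\phi(y)=l\,\phi\!\left(\frac{y-\bar y}{l}\right)$ for $y\in Q_{\bar y,l}$. The chain rule gives
\[
(\dv\tilde g)(y)=l^2\cdot\frac1l\,(\dv v)\!\left(\frac{y-\bar y}{l}\right)=l\,\phi\!\left(\frac{y-\bar y}{l}\right)=\tilde\phi(y),
\]
which establishes $\dv\tilde g=\tilde\phi$ a.e.\ in $Q_{\bar y,l}$. Similarly,
\[
\tilde g_t(y)=l\,v_t\!\left(\frac{y-\bar y}{l}\right),\qquad \tilde\phi_t(y)=\phi_t\!\left(\frac{y-\bar y}{l}\right),
\]
so
\[
\|\tilde g_t\|_{L^\infty(Q_{\bar y,l})}=l\,\|v_t\|_{L^\infty(Q_0)}\le C_n\,l\,\|\phi_t\|_{L^\infty(Q_0)}=C_n\,l\,\|\tilde\phi_t\|_{L^\infty(Q_{\bar y,l})},
\]
which is precisely \eqref{div-2}. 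The zero boundary trace of $\tilde g$ on $\partial Q_{\bar y,l}$ is inherited from the vanishing of $v$ on $\partial Q_0$, so $\tilde g\in W^{1,\infty}_0(Q_{\bar y,l};\R^n)$.

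Finally, for the $C^1$ statement, if $\phi\in C^1(\bar Q_0)$ then $v=\mathcal R\phi\in C^1(\bar Q_0;\R^n)$, and composition with the smooth affine map $y\mapsto(y-\bar y)/l$ together with multiplication by the constant $l^2$ preserves $C^1$-regularity up to the boundary, giving $\tilde g\in C^1(\bar Q_{\bar y,l};\R^n)$. The only thing to be careful about throughout is the scaling: the extra factor of $l$ built into the definition $\mathcal R_{\bar y,l}\phi=l\,\mathcal L_{\bar y,l}(\mathcal R\phi)$ is precisely what makes the spatial divergence match $\tilde\phi$, while the time derivative then picks up exactly one factor of $l$, producing the constant $C_n l$ in \eqref{div-2}.
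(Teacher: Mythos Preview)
Your proof is correct and follows exactly the approach implicit in the paper, which simply declares the corollary ``immediate'' from Lemma~\ref{div-inv}; you have correctly unpacked the scaling computation that the paper omits.
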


\subsection*{Proof of Theorem \ref{density1}}  Let $\mathcal U$ and $\mathcal U_\epsilon$ be defined as above. 
Clearly $\bar  u \in\mathcal U;$ so $\mathcal U\ne \emptyset.$ Since $\Sigma$ is bounded, let
 \begin{equation} \label{boundM}
\begin{split} &M:=\sup_{[A,(b^i)]\in\Sigma} (|A|+|\sigma(A)|+|b^i|)  <\infty,\\
&\tilde M=\max_{|A|\le 1+3M} |\sigma(A)|,\quad
 \alpha(s)=\sup_{|A|,|A'|\le 1+3M\atop |A-A'|<s} |\sigma(A)-\sigma(A')|.
\end{split}
\end{equation}
The uniform continuity of $\sigma$ on $|A|\le 1+3M$ implies that  
$\alpha(s)\to 0$ as $s\to 0^+.$ Moreover, $\mathcal U$ is a bounded set  in $W_{\bar  u }^{1,\infty}(\Omega_T;\R^m)$ with 
\begin{equation}\label{bd-U}
\| u _t\|_{L^\infty(\Omega_T)}+\|D u \|_{L^\infty(\Omega_T)}\le a+M \quad\forall\;  u \in\mathcal U.
\end{equation}

Let $\epsilon>0, \,  u \in \mathcal U$ and $\rho>0$ be fixed. Then 
$\| u _t\|_{L^\infty(\Omega_T)}<a$ and there exist functions  $ v ^i \in C^1_{\bar  v ^i,pc}(\Omega_T;\R^n)$ with   partition $\{E_j\}_{j=1}^\mu$ such that 
\[
u^i=\dv  v ^i,\quad [D u,( v ^i_t)] \in\Sigma  \;\;\; \mbox{on $\bar E_j$ \;\; ($i=1,\dots,m;\; j=1,\dots,\mu$).}
\]
The goal is to construct a function  $\tilde  u \in\mathcal U_\epsilon$ such that $\|\tilde  u - u \|_{L^\infty(\Omega_T)}<\rho;$ that is,    we are to construct a function $\tilde u \in  C_{\bar u } ^1(\bar\Omega_T;\R^m)$ such that
\begin{itemize}
\item[(i)]    $\|\tilde  u _t\|_{L^\infty(\Omega_T)}<a$ and $\|\tilde  u - u \|_{L^\infty(\Omega_T)}<\rho;$  
\item[(ii)]  $\exists\, \tilde  v ^i\in C^1_{\bar  v ^i,pc}(\Omega_T;\R^n)$ and some partition $\{P_j\}_{j=1}^\kappa$ with
 \begin{equation}\label{pf-need}
 \begin{cases}   \tilde u^i=\dv \tilde  v ^i \quad \mbox{on each $\bar P_j$,}\\
  [D\tilde  u , (\tilde  v ^i_t)] \in\Sigma \quad \mbox{on each $\bar P_j$,}\\
   \|\tilde  v ^i_t-\sigma^i(D\tilde  u ) \|_{L^2(\Omega_T)}<\epsilon.
\end{cases}
\end{equation}
\end{itemize}

We  accomplish  the construction of $\tilde u $ in several steps. For simplicity, in what follows, we write $y=(x,t).$

\subsection*{Step 1}  
Fix $\nu  \in \{1,\dots,\mu\}$ and  $\bar y\in  E_\nu.$ Let
$A= D u (\bar y)$ and $b^i=   v ^i_t(\bar y);$ then  $[A,(b^i)] \in \Sigma.$ By Condition (OC),  there exists a $\tau_N$-configuration $(\xi_1,\xi_2,\dots,\xi_N)$  in $\mathbb K$  given by $\rho=[\tilde A,(\tilde b^i)], \gamma_j=[p_j\otimes \alpha_j,(s_j\beta_j^i)]$ and $\kappa_j>1$  such that
$
[A,(b^i)]\in \tau(\xi_1,\dots, \xi_N)\subset \Sigma.$ With these $\tilde A, \tilde b^i, p_j,\alpha_j, s_j,  \beta_j^i$ and  $\kappa_j$, let $\tilde  P, \tilde P_j^s$ and $(\tilde X_1^s,\dots,\tilde X_N^s)$ be  defined as in Lemma \ref{lem-X-s}.   Let $\tau>0$ be sufficiently small so that, for
\begin{equation}\label{pm-4}
X_j^{s,\tau}=(1-\tau) \tilde X^s_j + \tau \tilde P^s_j \quad (j=1,\dots,N),
\end{equation}
the $N$-tuple $(X_1^{s,\tau},\dots,X_N^{s,\tau})$ is an admissible $T_N$-configuration and  that $[A,(b^i)]\in \mathbb P( T (X_1^{s,\tau},\dots,X_N^{s,\tau}))$; note that $\mathbb P(X_j^{s,\tau})=\mathbb P(X_j^{1,\tau})$ for all $s\ne 0.$ Since
\[
\lim_{\tau\to 0^+} \dist(\mathbb P(X_j^{1,\tau}); \mathbb K)= \dist(\mathbb P(\tilde X^1_j); \mathbb K)= \dist(\xi_j; \mathbb K)=0,
\]
there exists  a further smaller $\tau>0$ such that
\begin{equation}\label{tau1}
\dist(\mathbb P( X_j^{1,\tau}); \mathbb K)<\frac{\epsilon}{8(|\Omega_T|)^{1/2}}\;\; (j=1,\dots,N).
\end{equation}
Fix such a $\tau>0.$ Then 
\[
\mathbb P(\bar { T }(X_1^{1,\tau},\dots,X_N^{1,\tau})) \subset \mathbb P({ T }(X_1,\dots,X_N)) \subset \Sigma.
\]
Since $\Sigma$ is open and $\mathbb P(\bar { T }(X_1^{1,\tau},\dots,X_N^{1,\tau}))$ is compact,  there exists a number  $\delta_\tau>0$  such that
\[
 [\mathbb P(\bar { T }(X_1^{1,\tau},\dots,X_N^{1,\tau}))]_{\delta_\tau}\subset \Sigma.
\]
 Hence, for all $s\ne 0$, 
  \begin{equation}\label{tau2}
\mathbb  P([\bar{ T } (X_1^{s,\tau},\dots,X_N^{s,\tau})]_{\delta_\tau})\subset  [\mathbb P(\bar { T }(X_1^{s,\tau},\dots,X_N^{s,\tau}))]_{\delta_\tau}\subset \Sigma.
 \end{equation}

\subsection*{Step 2} Apply Theorem \ref{thm1} to the unit cube  $G=Q_0\subset \R^{n+1}$ with $\tilde P\in  T ( X_1^{s,\tau},\dots, X_N^{s,\tau})$ to obtain  a function $\omega=[\varphi, (\psi^i)]\in C_c^{\infty}(Q_{0};\R^m \times (\R^n)^m)$ such that
\begin{itemize}
\item[(a)] \; $\dv \psi^i=0$  in $\bar Q_{0}$,

\item[(b)] \; $|\{y\in Q_{0} : [A+D\varphi(y),(b^i +\psi^i_t(y))]\notin \{\xi_1,\dots,\xi_N\}|<\epsilon',$

\item[(c)] \; $ [A+D\varphi(y),(b^i +\psi^i_t(y))]\in \mathbb P([\bar{ T } (X_1^{s,\tau},\dots,X_N^{s,\tau})]_{\epsilon'})$ for all $y\in Q_{0},$

\item[(d)] \; $\|\varphi_t\|_{L^\infty(Q_{0})} < \epsilon' +M'|s|; \; \|\varphi\|_{L^\infty(Q_{0})}<\epsilon',$

\item[(e)] \; $\int_{\tilde Q_{0}}\varphi(x,t)\,dx=0$ for all $t\in (0,1),$
\end{itemize}
where  $M'>0$ is a constant depending on $[A,(b^i)],$ and $\epsilon'\in (0,1)$ is  a number to be chosen later.

\subsection*{Step 3} Let $0<l<1$. Consider the functions $[\tilde \varphi,(\tilde \psi^i)]= \mathcal L_{\bar y, l} [\varphi,(\psi^i))]$ and $\tilde  g^i=\mathcal R_{\bar y,l}  \varphi^i$ defined on $Q_{\bar y,l},$   where $\mathcal L_{\bar y,l}$ and $\mathcal R_{\bar y,l}$ are the operators defined above.  
 Let
\begin{equation}\label{new-fun}
\tilde u =  u _{\bar y,l}= u +\tilde \varphi,\;\; \tilde  v ^i =  v ^i_{\bar y,l}= v ^i + \tilde \psi^i + \tilde {g}^i \quad \mbox{ on $Q_{\bar y,l}.$}
 \end{equation}
 Then, $\tilde u \in  u +C_c^\infty(Q_{\bar y,l})$, $\tilde  v ^i \in W^{1,\infty}_{ v ^i}(Q_{\bar y,l})\cap C^1(\bar Q_{\bar y,l}),$ and  $\dv \tilde  v ^i =\tilde u^i$ on $\bar Q_{\bar y,l}.$ 
By (c) and (d) of Step 2,  (\ref{boundM}),  and Corollary \ref{div-inv-1}, we have
\begin{equation}\label{d-1}
\begin{cases}\|\tilde   u - u \|_{L^\infty(Q_{\bar y,l})}=\|\tilde \varphi\|_{L^\infty(Q_{\bar y,l})}<l\epsilon'<\epsilon',\\
 \|\tilde   u _t\|_{L^\infty(Q_{\bar y,l})}<\| u _t\|_{L^\infty(\Omega_T)} +\epsilon'+M'|s|,\\
 \|\tilde  g^i_t\|_{L^\infty(Q_{\bar y,l})}\le  C_n l (\epsilon'+M'|s|),\\
 \|D\tilde \varphi \|_{L^\infty(Q_{\bar y,l})}\le \epsilon'+M,\\
\|\tilde \psi^i_t\|_{L^\infty(Q_{\bar y,l})}\le \epsilon'+ M.
\end{cases}
\end{equation}

\subsection*{Step 4} In this step, we estimate $\|\tilde  v ^i_t-\sigma^i(D\tilde  u )\|_{L^2(Q_{\bar y,l})}.$ Note that
\[
\begin{split} 
\|\tilde  v ^i_t  - &\sigma^i(D\tilde  u )\|_{L^2(Q_{\bar y,l})}  =\|  v ^i_t+\tilde \psi^i_t+ \tilde  g^i_t-\sigma^i(D u +D\tilde \varphi)\|_{L^2(Q_{\bar y,l})}\\
&\le   \| v ^i_t-b^i \|_{L^2(Q_{\bar y,l})}+\|b^i +\tilde \psi^i_t-\sigma^i(A+D\tilde \varphi)\|_{L^2(Q_{\bar y,l})}\\
&+ \|\tilde  g^i_t\|_{L^2(Q_{\bar y,l})}+ \|\sigma^i(A+D\tilde \varphi)-\sigma^i(D u +D\tilde \varphi)\|_{L^2(Q_{\bar y,l})}.
\end{split}
\]
By (\ref{d-1}),
\[
\|\tilde  g^i_t\|_{L^2(Q_{\bar y,l})}\le  C_n l (\epsilon'+M'|s|)|Q_{\bar y,l}|^{1/2}.
\]
Note that
\[
\|b^i+\tilde \psi^i_t-\sigma^i(A+D\tilde \varphi)\|^2_{L^2(Q_{\bar y,l})}=\int_{F\cup G} |b^i+\tilde \psi^i_t-\sigma^i(A+D\tilde \varphi)|^2\,dy,
\]
where $G=Q_{\bar y,l}\setminus F$ and
\[
F=\{y\in Q_{\bar y,l}\, |\,[A+D\tilde \varphi(y),(b^i+\tilde \psi^i_t(y))]\notin \{\cup_{j=1}^N \mathbb P(X_j)\}\}.
\]
 By (b) of Step 2, $|F|<\epsilon'|Q_{\bar y,l}|$ and, by (\ref{d-1}),  $|A+D\tilde \varphi|\le 1+3M$ and $|D u +D\tilde \varphi|\le 1+3M$ on $Q_{\bar y,l}.$  Hence
 \[
 \int_{F} |b^i+\psi^i_t-\sigma^i(A+D\varphi)|^2\,dy<\epsilon' (1+3M+\tilde M)^2|Q_{\bar y,l}|.
 \]
 By (\ref{tau1}),
 \[
  \int_{G} |b^i+\psi^i_t-\sigma^i(A+D\varphi)|^2\,dy\le \frac{\epsilon^2}{32|\Omega_T|} |Q_{\bar y,l}|.
  \]
 Hence
\[
\|b^i+\tilde \psi^i_t-\sigma^i(A+D\tilde \varphi)\|^2_{L^2(Q_{\bar y,l})}\le \left [(1+3M+\tilde M)\sqrt{\epsilon'} + \frac{\epsilon}{4(|\Omega_T|)^{1/2}}\right ] |Q_{\bar y,l}|^{1/2}.
 \]
 Let
\[
m(l)=\max_{1\le i\le m;\; y\in Q_{\bar y,l}} \left ( | v ^i_t(y)-b^i|+|D u (y)-A|\right ).
\]
Then $m(l)\to 0$ as $l\to 0^+.$ We have the following estimates:
\[
\| v ^i_t-b^i\|_{L^2(Q_{\bar y,l})}\le m(l) |Q_{\bar y,l}|^{1/2};
\]
\[
\|\sigma^i(A+D\tilde \varphi)-\sigma^i(D u +D\tilde \varphi)\|_{L^2(Q_{\bar y,l})}\le \alpha(m(l))|Q_{\bar y,l}|^{1/2},
\] 
where $\alpha(s)$ is the function defined in (\ref{boundM}). Hence, we obtain 
  \begin{equation}\label{L2-est}
 \begin{split}
   \|&\tilde  v ^i_t -\sigma^i(D\tilde  u )\|_{L^2(Q_{\bar y,l})}   \le  \Big [(1+3M+\tilde M)\sqrt{\epsilon'} + C_nl \epsilon' \\
 +&m(l)+\alpha(m(l))+ 2MC_nl  |s| +\frac{\epsilon}{4(|\Omega_T|)^{1/2}} \Big ]|Q_{\bar y,l}|^{1/2}. 
  \end{split}
 \end{equation}
 
 \subsection*{Step 5} In this step, we estimate 
 \[
 \dist([D\tilde  u ,(\tilde  v ^i_t)];\,\mathbb P(\bar{ T }(X_1^{1,\tau},\dots,X_N^{1,\tau})))
 \]
  on $Q_{\bar y,l}$. Since $D\tilde u =D u +D\varphi$ and $\tilde  v ^i_t= v ^i_t+\tilde \psi^i_t+\tilde  g^i_t$, we have on $Q_{\bar y,l}$,
 \[
 \begin{split} &\dist([D\tilde  u , (\tilde  v ^i_t)];\,\mathbb P(\bar{ T }(X_1^{1,\tau},\dots,X_N^{1,\tau})))\\
   \le  \dist([A&+D\tilde \varphi,(b^i + \tilde\psi^i_t)];\, \mathbb P(\bar{ T }(X_1^{1,\tau},\dots,X_N^{1,\tau}))) +|[D u -A, ( v ^i_t-b^i +\tilde  g^i_t)]|\\
  \le  \dist([A&+D\tilde \varphi,(b^i + \tilde\psi^i_t)];\, \mathbb P(\bar{ T }(X_1^{1,\tau},\dots,X_N^{1,\tau}))) +|D u -A| +|( v ^i_t-b^i)|+|\tilde  g^i_t|.
\end{split}
 \]
 By (c) of Step 2 and (\ref{d-1}), we have 
 \begin{equation}\label{dist-est}
 \begin{split}
 \dist([D\tilde  u ,(\tilde  v ^i_t)];\, &\mathbb P(\bar{ T }(X_1^{1,\tau},\dots,X_N^{1,\tau})))\\
 &< (1+C_n l)\epsilon' + 2m(l) + 2MC_n l|s|. \end{split}
  \end{equation}
    
    \subsection*{Step 6} In this step, we select the small numbers $\epsilon'\in (0,1)$ and $s\ne 0$ in the estimates (\ref{d-1}), (\ref{L2-est}) and (\ref{dist-est}) to ensure that, for all sufficiently small $l\in (0,1)$, it holds that
\begin{equation}\label{pf8}
\begin{cases}
\|\tilde  u - u \|_{L^\infty(Q_{\bar y,l})} <\rho,\\
\|\tilde  u _t\|_{L^\infty(Q_{\bar y,l})} <a,\\
    [D\tilde  u ,(\tilde  v ^i_t)] \in \Sigma \;\;\mbox{on $Q_{\bar y,l},$}\\
   \|\tilde  v ^i_t-\sigma^i(D\tilde  u )\|_{L^2(Q_{\bar y,l})}<\dfrac{\epsilon}{2(|\Omega_T|)^{1/2}} |Q_{\bar y,l}|^{1/2}.
   \end{cases}
   \end{equation}
   To do so, in  view of   (\ref{d-1}), (\ref{L2-est}) and  (\ref{dist-est}), we select $\epsilon'$ and $s$ such that 
  \[
  \begin{cases}
  0<\epsilon'<\min\left \{1,\, \rho, \frac{a-\|u_t\|_{L^\infty(\Omega_T)}}{2},\frac{\delta_\tau}{3(1+C_n)}\right \},\\
    (1+3M+\tilde M)\sqrt{\epsilon'}+C_n\epsilon' <\frac{\epsilon}{16(|\Omega_T|)^{1/2}},\\
   0<|s|<\min\left\{\frac{a-\|u_t\|_{L^\infty(\Omega_T)}}{4M},\; \frac{\epsilon}{32(|\Omega_T|)^{1/2}C_n M},\; \frac{\delta_\tau}{6C_n M}\right\}.
   \end{cases}
  \]
  This guarantees the first two requirements in (\ref{pf8}). 
  Also, by (\ref{L2-est}) and (\ref{dist-est}),  
  \[
     \|\tilde  v ^i_t -\sigma^i(D\tilde  u )\|_{L^2(Q_{\bar y,l})}   < \left [ \frac{3\epsilon}{8(|\Omega_T|)^{1/2}}  +m(l)+\alpha(m(l))\right ] |Q_{\bar y,l}|^{1/2},
     \]
     \[
 \dist([D\tilde  u ,(\tilde  v ^i_t)];\,\mathbb P(\bar{ T }(X_1^{1,\tau},\dots,X_N^{1,\tau})))
  <\frac{2\delta_\tau}{3} +2m(l).
  \]
Since $m(l)\to 0$ and $\alpha(m(l))\to 0$ as $l\to 0^+$ and $E_j$ is open, there exists a number $0<l_{\bar y}<1$ such that, for all $0<l<l_{\bar y}$,
     \[
   Q_{\bar y,l}\subset E_j,\quad   m(l)+\alpha(m(l))<\frac{\epsilon}{8(|\Omega_T|)^{1/2}},\quad m(l)<\frac{\delta_\tau}{6},
     \]
 where $\delta_\tau>0$ is the number  defined by (\ref{tau2}).  With  all such $l$'s, the last two requirements in (\ref{pf8}) are also satisfied on $Q_{\bar y,l}$: for example, from
\[
\dist([D\tilde  u ,(\tilde  v ^i_t)];\,\mathbb P(\bar{ T }(X_1^{1,\tau},\dots,X_N^{1,\tau})))
  <\delta_\tau \quad \mbox{on $Q_{\bar y,l}$}
  \]
  and (\ref{tau2}), we have
      \[
  [\tilde  u ,(\tilde  v ^i_t)] \in [\mathbb P(\bar{ T }(X_1^{1,\tau},\dots,X_N^{1,\tau}))]_{\delta_\tau}\subset \Sigma \quad \mbox{on $Q_{\bar y,l}.$}
  \]

\subsection*{Step 7} For the fixed $\nu$, the family $\{\bar Q_{\bar y,l}\, |\;\bar y\in E_\nu,\; 0<l<l_{\bar y}\}$ forms a Vitali covering of the set $E_\nu$ by closed cubes (see \cite{DM}). Thus,  there exists a countable subfamily of disjoint  closed cubes $\{P_{\nu,k}=\bar Q_{\bar y_k,l_k} \;|\, k=1,2,\dots\}$ such that
  \[
  E_\nu=(\cup_{k=1}^\infty P_{\nu,k})\cup R_\nu,\quad |R_\nu|=0.
  \]
 Let $\tilde  u _{\nu,k}=  u _{\bar y_k,l_k}$ and $\tilde  v ^i_{\nu,k}= v ^i_{\bar y_k,l_k}$ be the functions defined by (\ref{new-fun}) on $P_{\nu,k}=\bar Q_{\bar y_k,l_k}.$
  For each $\nu=1,\dots,\mu$, let $N_\nu$ be such that
 \begin{equation}\label{L2-0}
  \left | \cup_{k=N_\nu+1} ^\infty P_{\nu,k}\right |=\sum_{k=N_\nu+1}^\infty |P_{\nu,k}| <\frac{\epsilon^2}{2\mu M^2}.
 \end{equation}
Consider  the partition $\Omega_T= \left (\cup_{\nu=1}^\mu \cup_{k=1}^{N_\nu} P_{\nu,k} \right)\cup P,$ 
 where 
 \[
 P=\Omega_T\setminus \left (\cup_{\nu=1}^\mu \cup_{k=1}^{N_\nu} P_{\nu,k} \right) =\left (\cup_{\nu=1}^\mu \cup_{k=N_\nu+1}^\infty P_{\nu,k} \right) \cup R
 \]
with $|R|=0.$  With this partition, we define 
 \[
\tilde  u =u \chi_{P}  + \sum_{\nu=1}^\mu \sum_{k=1}^{N_\nu} \tilde  u _{\nu,k} \chi_{P_{\nu,k}},\quad 
\tilde  v ^i= v^i \chi_{P}  + \sum_{\nu=1}^\mu \sum_{k=1}^{N_\nu} \tilde  v ^i_{\nu,k}  \chi_{P_{\nu,k}}.
\]
Then $\tilde  u - u \in C^\infty_c (P_{\nu,k})$ and $\tilde  v ^i- v ^i\in C^1(P_{\nu,k})$ for all $\nu.$  It is straightforward to see that $\tilde u \in W_{\bar  u }^{1,\infty}(\Omega_T)\cap C^1(\bar\Omega_T;\R^m)$ and $\tilde  v ^i\in C^1_{\bar  v ^i,pc}(\Omega_T;(\R^n)^m)$ with partition being
$\{P,\;P_{\nu,k} \; |\; \nu=1,\dots,\mu, \; k=1,\dots,N_\nu\}.$
Moreover, all requirements in (i) and (ii) at the start of the proof are  satisfied; for example,  the last one in (\ref{pf-need}) is proved  as follows.
\[
\begin{split}
&\qquad \qquad \qquad \|\tilde  v ^i_t-  \sigma^i(D\tilde  u ) \|_{L^2(\Omega_T)}^2  \\
&=\sum_{\nu=1}^\mu  \sum_{k=1}^{N_\nu} \|\tilde  v ^i_t- \sigma^i(D\tilde  u )\|_{L^2(P_{\nu,k})}^2   + \sum_{\nu=1}^\mu \sum_{k=N_\nu+1}^\infty \| v ^i_t- \sigma^i(D u )\|_{L^2(P_{\nu,k})}^2  \\
&\le \sum_{\nu=1}^\mu\sum_{k=1}^{N_\nu} \frac{\epsilon^2}{4|\Omega_T|} |P_{\nu,k}| + \sum_{\nu=1}^\mu\sum_{k=N_\nu+1}^\infty M^2 |P_{\nu,k}|\le \frac{\epsilon^2}{4|\Omega_T|} |\Omega_T| +   \frac{\mu M^2 \epsilon^2}{2\mu M^2}< \epsilon^2.
\end{split}
\]

Finally the proof of Theorem \ref{density1} is completed.

\end{document}